\if@twocolumn\PassOptionsToPackage{switch}{lineno}\else\fi\makeatother
\newcommand*\patchAmsMathEnvironmentForLineno[1]{
  \expandafter\let\csname old#1\expandafter\endcsname\csname #1\endcsname
  \expandafter\let\csname oldend#1\expandafter\endcsname\csname end#1\endcsname
  \renewenvironment{#1}
     {\linenomath\csname old#1\endcsname}
     {\csname oldend#1\endcsname\endlinenomath}}
\newcommand*\patchBothAmsMathEnvironmentsForLineno[1]{
  \patchAmsMathEnvironmentForLineno{#1}
  \patchAmsMathEnvironmentForLineno{#1*}}
\newtheorem{theo}{Theorem}[section]
\newtheorem{lem}[theo]{Lemma}%[section]
\newtheorem{rem}[theo]{Remark}%[section]
\newtheorem{coro}[theo]{Corollary}%[section]
\newtheorem{defi}[theo]{Definition}%[section]
\def\mcWidth#1{\csname TY@F#1\endcsname+\tabcolsep}
\def\cAlignHack{\rightskip\@flushglue\leftskip\@flushglue\parindent\z@\parfillskip\z@skip}
\def\rAlignHack{\rightskip\z@skip\leftskip\@flushglue \parindent\z@\parfillskip\z@skip}
\if@twocolumn\@ifpackageloaded{stfloats}{}{\usepackage{dblfloatfix}}\fi\fi
\def\eqalign#1{\null\vcenter{\def\\{\cr}\openup\jot\m@th
  \ialign{\strut$\displaystyle{##}$\hfil&$\displaystyle{{}##}$\hfil
      \crcr#1\crcr}}\,}
\renewcommand\efloat@iwrite[1]{\immediate\expandafter\protected@write\csname efloat@post#1\endcsname{}}}{\newif\ifefloat@tables}%
\def\BreakURLText#1{\@tfor\brk@tempa:=#1\do{\brk@tempa\hskip0pt}}
\let\lt=<
\let\gt=>
\def\processVert{\ifmmode|\else\textbar\fi}
\def\subparagraph{\@startsection{paragraph}{5}{2\parindent}{0ex plus 0.1ex minus 0.1ex}%
{0ex}{\normalfont\small\itshape}}%
\newcommand\role[1]{\unskip}
\newcommand\aucollab[1]{\unskip}
\def\checkGraphicsWidth{\ifdim\Gin@nat@width>\linewidth
	\tsGraphicsScaleX\linewidth\else\Gin@nat@width\fi}
\def\checkGraphicsHeight{\ifdim\Gin@nat@height>.9\textheight
	\tsGraphicsScaleY\textheight\else\Gin@nat@height\fi}
\def\fixFloatSize#1{}%\@ifundefined{processdelayedfloats}{\setbox0=\hbox{\includegraphics{#1}}\ifnum\wd0<\columnwidth\relax\renewenvironment{figure*}{\begin{figure}}{\end{figure}}\fi}{}}
\let\ts@includegraphics\includegraphics
\def\inlinegraphic[#1]#2{{\edef\@tempa{#1}\edef\baseline@shift{\ifx\@tempa\@empty0\else#1\fi}\edef\tempZ{\the\numexpr(\numexpr(\baseline@shift*\f@size/100))}\protect\raisebox{\tempZ pt}{\ts@includegraphics{#2}}}}
\DeclareMathAlphabet{\mathpzc}{OT1}{pzc}{m}{it}
\def\URL#1#2{\@ifundefined{href}{#2}{\href{#1}{#2}}}
\def\UrlOrds{\do\*\do\-\do\~\do\'\do\"\do\-}%
\g@addto@macro{\UrlBreaks}{\UrlOrds}
\edef\fntEncoding{\f@encoding}
\newif\ifmultipleabstract\multipleabstractfalse%
\def\wileyIndent{1pt}
\renewenvironment{abstract}
{\vspace*{-1pc}\trivlist\item[]\leftskip\wileyIndent\hrulefill\par\vskip4pt\noindent\textbf{\abstractname}\mbox{\null}\\}{\par\noindent\hrulefill\endtrivlist}
\def\author#1{\gdef\@author{\hskip-\dimexpr(\tabcolsep)\hskip\wileyIndent\parbox{\dimexpr\textwidth-\wileyIndent}{\centering\bfseries#1}}}
\def\title#1{\linespread{1}\gdef\@title{\centering\bfseries\ifx\@articleType\@empty\else\@articleType\\\fi#1}}
\let\@articleType\@empty \def\articletype#1{\gdef\@articleType{{\normalfont\itshape#1}}}
 \def\audegree#1{}
\date{}
\def\thanksspace{{\phantom{\textsuperscript{\thefootnote}}}}
\begin{document}

\title{Rigorous numerical enclosures for positive solutions of Lane--Emden's equation with sub-square exponents}
\author{Kazuaki~Tanaka\textsuperscript{1}\thanks{Corresponding author.}\thanksspace \thanks{E-mail:                     
                     tanaka@ims.sci.waseda.ac.jp}{\thanksspace}, Michael~Plum\textsuperscript{2}, Kouta~Sekine\textsuperscript{3}, Masahide~Kashiwagi\textsuperscript{4}\space and Shin'ichi~Oishi\textsuperscript{4}~\\[-3pt]\normalsize\normalfont  \itshape ~\\
\textsuperscript{1}{Institute for Mathematical Science\unskip, Waseda University\unskip, 3-4-1 Okubo, Shinjuku-ku, Tokyo 169- 8555, Japan}~\\
\textsuperscript{2}{Institut f{\"{u}}r Analysis\unskip,  Karlsruhe Institut f{\"{u}}r Technologie\unskip, Englerstra{\ss}e 2, 76131 Karlsruhe, Germany}~\\
%\textsuperscript{3}{Faculty of Information Networking for Innovation and Design\unskip, Toyo University\unskip, 1-7-11 Akabanedai, Kita, Tokyo 115-0053, Japan}~\\
\textsuperscript{3}{Department of Information and Communication Systems Engineering, Chiba Institute of Technology, 2-17-1 Tsudanuma, Narashino-shi, Chiba 275-0016, Japan}~\\
\textsuperscript{4}{Faculty of Science and Engineering\unskip, Waseda University\unskip, 3-4-1 Okubo, Shinjuku-ku, Tokyo 169- 8555, Japan}}

\def\RunningHead{}\def\RunningAuthor{Tanaka \MakeLowercase{\textit{et al.}} }

\maketitle

\begin{abstract}
	The purpose of this paper is to obtain rigorous numerical enclosures for solutions of Lane--Emden's equation $-\Delta u=|u|^{p-1} u$ with homogeneous Dirichlet boundary conditions.
	We prove the existence of a nondegenerate solution $u$ nearby a numerically computed approximation $\hat{u}$ together with an explicit error bound, i.e., a bound for the difference between $ u $ and $\hat{u}$.
	In particular, we focus on the sub-square case in which $1<p<2$ so that the derivative $p|u|^{p-1}$ of the nonlinearity $|u|^{p-1} u$ is not Lipschitz continuous.
	In this case, 
	it is problematic to apply the classical Newton-Kantorovich theorem for obtaining the existence proof,
	and moreover several difficulties arise in the procedures to obtain numerical integrations rigorously.
	We design a method for enclosing the required integrations explicitly, proving the existence of a desired solution based on a generalized Newton-Kantorovich theorem.
	A numerical example is presented where an explicit solution-enclosure is obtained for $ p=3/2 $ on the unit square domain $\Omega=(0,1)^2$.

\def\keywordstitle{Keywords}
\smallskip\noindent\textbf{Keywords: }{Lane--Emden’s equation, Elliptic boundary value problems, Positive solutions, Rigorous enclosures, Sub-square exponent, Computer-assisted proofs,  Numerical verification}
\end{abstract}
    
\section{Introduction}\label{intro}
Over the last several decades, active studies have been conducted on solutions of the Dirichlet problem for Lane-Emden's equation:
\begin{align}
	\left\{\begin{array}{l l}
		-\Delta u=f(u):=\left|u\right|^{p-1}u &\mathrm{in}\ \Omega,\\
		u=0 &\mathrm{on}\ \partial\Omega
	\end{array}\right.\label{absproblem}
\end{align}
%\begin{align}
%	\left\{\begin{array}{l l}
%		-\Delta u=u^p &\mathrm{in}\ \Omega,\\
%%		u>0 &\mathrm{in}\ \Omega,\\
%		u=0 &\mathrm{on}\ \partial\Omega,
%	\end{array}\right.\label{emden_positive}
%\end{align}
where $\Omega\subset \mathbb{R}^{n}$ ($n=2,3,\cdots$) is a bounded domain
and $ p $ is a subcritical exponent that satisfies $ 1<p<p^* $;
$p^*=\infty$ when $n=2$ and $p^*=(n+2)/(n-2)$ when $ n\geq3 $.
Here we are interested in proving the existence of a positive solution of problem \eqref{absproblem}.
Throughout this paper, we denote the $k$th order $L^{2}$-Sobolev space over $\Omega$ by $H^{k}\left(\Omega\right)$.
We define $ H_{0}^{1}\left(\Omega\right):=\{u\in H^{1}\left(\Omega\right)\ :\ u=0\ \mathrm{on}\ \partial\Omega$ in the trace sense$\}$ with inner product $\left(\cdot,\cdot\right)_{H_{0}^{1}}:=\left(\nabla\cdot,\nabla\cdot\right)_{L^{2}}$ and norm $\left\|\cdot\right\|_{H_{0}^{1}}:=\left\|\nabla\cdot\right\|_{L^{2}}$.
Moreover, $H^{-1}$ denotes the dual space of $ H^1_0(\Omega) $ with the usual supremum norm.

%The investigation \eqref{emden_positive} has attracted attention \cite{lions1982existence,gidas1979symmetry,lin1994uniqueness,damascelli1999qualitative,gladiali2011bifurcation,de2018morse}.
%Here are some important examples derived from model problems in many applications that we are interested in.
Positive solutions of problem \eqref{absproblem} have been analytically investigated from various points of view --- uniqueness, multiplicity, nondegeneracy, symmetry, and so on \cite{lions1982existence,gidas1979symmetry,lin1994uniqueness,damascelli1999qualitative,gladiali2011bifurcation,de2019morse}.
%, symmetricity
%One of the our interests is in the open question whether positive solutions of \eqref{emden_positive} are unique for a bounded convex domain;
%the uniqueness was conjectured in \cite{gidas1979symmetry}.
%Several articles have been devoted to solving this question so far and proved the uniqueness in certain special cases (see, e.g, \cite{lin1994uniqueness,damascelli1999qualitative,de2018morse} and the references therein). 
%It was revealed that when $ p $ is sufficiently close to 1 the solution of \eqref{emden_positive} is unique and nondegenerate in any domain $\Omega\subset \mathbb{R}^{n}$ \cite{lin1994uniqueness,damascelli1999qualitative}.
%An effective approach for proving the uniqueness is to find a nondegenerate solution-branch with respect to $ p $ starting from a unique nondegenerate solution with $ p $ close enough to $ 1 $ (see again \cite{lin1994uniqueness,damascelli1999qualitative}).
%In this sense, nondegeneracy is a sufficient condition for the uniqueness.
On the other hand, several investigations have been conducted to obtain existence proofs via rigorous numerical enclosures for solutions of problem \eqref{absproblem} \cite{plum2001computer,mckenna2009uniqueness,mckenna2012computer,takayasu2014remarks,pacella2017computer,tanaka2017sharp,tanaka2020numerical}.
Such approaches are known as computer-assisted proofs, numerical verification, validated numerics, or verified numerical computations and have been applied to various problems, including some for which purely analytical methods have failed (see, for example, \cite{day2007validated,plum2008,gameiro2010analytic,van2010global,arioli2010computer,nakao2011numerical,gameiro2011rigorous,arioli2012non}).
In the survey book \cite{nakaoplumwatanabe2019numerical},
these topics are summarized and extended.
In \cite{plum2001computer}, a nontrivial solution on the unit square $ \Omega = (0,1)^2 $ was enclosed for $ p=2 $ using the quadratic triangular finite element method.
For the same domain,  existence and -- more important -- uniqueness was proved with explicit error bounds when $ p=2 $ \cite{mckenna2009uniqueness} and $ p=3 $ \cite{mckenna2012computer}, even when a linear term is added.
In \cite{takayasu2014remarks}, for the L-shaped domain $ \Omega=(-1,1)^2 \backslash [-1,0]^2 $ where a singularity arises at the re-entrant corner, a nontrivial solution was enclosed for $ p=2 $ via the classical Newton-Kantorovich's theorem close to an approximate solution constructed by a quadratic finite element basis on a non-uniform triangulation.
In \cite{pacella2017computer},
the existence of a non-trivial solution to Lane-Emden's equation was proved on an unbounded L-shaped domain, where purely analytical methods were unable to give existence.
Subsequently, in \cite{tanaka2017sharp}, nontrivial solutions on the unit square were very sharply enclosed for integers $ p=2,3,4,5,6 $ and applied to inclusions of the best constant for Sobolev embeddings.
In the recent study \cite{tanaka2020numerical}, methods for proving the positivity of enclosed solutions of elliptic problems were proposed and applied to problem \eqref{absproblem} with $ p=3,5 $.
Despite such successes for integers $ p $,
no explicit solution enclosure has been shown when $ p $ is a noninteger.

%One of the our interests is in the open question whether positive solutions of \eqref{emden_positive} are unique for a bounded convex domain;
%the uniqueness was conjectured in \cite{gidas1979symmetry}.
%Several articles have been devoted to solving this question so far and proved the uniqueness in certain special cases (see, for example, \cite{lin1994uniqueness,damascelli1999qualitative,de2019morse} and the references therein). 
%It was revealed that when $ p $ is sufficiently close to 1 the solution of \eqref{emden_positive} is unique and nondegenerate in any domain $\Omega\subset \mathbb{R}^{n}$ \cite{lin1994uniqueness,damascelli1999qualitative}.
%An effective approach for proving the uniqueness is to find a nondegenerate solution-branch with respect to $ p $ starting from a unique nondegenerate solution with $ p $ close enough to $ 1 $ (see again \cite{lin1994uniqueness,damascelli1999qualitative}).
%In this sense, finding a nondegenerate solution when $p \in (1,2)$ is important.

From this background, we pay attention to problem \eqref{absproblem} with $p \in (1,2)$.
Among the cases in which $p$ is noninteger, this case is especially challenging because the derivative $p|u|^{p-1}$ of the nonlinearity $|u|^{p-1} u$ is not  Lipschitz continuous but singular at the points where $u=0$.
Moreover, the amplitude of a solution of \eqref{absproblem} is much larger than when $p \geq 2$, raising as $ p \rightarrow 1 $.
These features give rise to some difficulties in rigorous computations for \eqref{absproblem}.

Our purpose is to prove the existence of a nondegenerate positive solution of \eqref{absproblem} nearby a numerically computed approximate solution $ \hat{u} \in H^1_0(\Omega) \cap L^{\infty}(\Omega) $ together with an explicit error bound in terms of the norm $\left\|\cdot\right\|_{H^1_0\left(\Omega\right)}$.
%To this end, we first try to obtain a numerical enclosure of a weak solution $u \in H^1_0(\Omega)$ of
%*************************
%\begin{align}
%	\left\{\begin{array}{l l}
%		-\Delta u=f(u):=\left|u\right|^{p-1}u &\mathrm{in}\ \Omega,\\
%		u=0 &\mathrm{on}\ \partial\Omega
%	\end{array}\right.\label{absproblem}
%\end{align}
%near the approximation $\hat{u}$,
%then confirming the positivity of $u$.
%An alternative way to look for a positive solution of \eqref{emden_positive} is to find a solution of $-\Delta u=\left|u\right|^{p}$ with the same boundary condition.
%For this equation, the positivity of every solution is a direct consequence of the maximum principle.
%So, under this aspect, we choose the more challenging of the two problems.
The existence proof is based on Theorem \ref{theo:plum2001} below,
an improved version of Newton-Kantorovich's theorem \cite{nakaoplumwatanabe2019numerical,plum2001computer,plum2008}.
This theorem was derived by applying a fixed point argument to 
Newton's operator for the error $u-\hat{u}$ (see Remark \ref{rem:Theorem1proof} for more information).
%a residual form which seems to be best possible for the contraction property of Newton operators (see Remark \ref{rem:Theorem1proof} for more information).
Because the condition on the Lipschitz continuity for $ f' $ is relaxed from the original version, Theorem \ref{theo:plum2001} is well applicable to even the case in which $p \in (1,2)$.
After the existence proof in terms of an enclosure in the $ H^1_0 $-norm, we further obtain a ball including the solution in terms of the norm $\left\|\cdot\right\|_{L^{\infty}\left(\Omega\right)}$ in the regular case $ u \in H^1_0(\Omega) \cap H^2(\Omega) $ by considering a norm bound for the embedding $ H^2(\Omega) \hookrightarrow L^{\infty}(\Omega) $.
%Let us prepare the following notation to derive the operator-equation form of \eqref{absproblem}.

%Moreover, in fact, the Fr\'echet-differentiability required for $\mathcal{F}$ can be further relaxed to that only at $\hat{u}$
%if sacrificing the verification of local uniqueness and nondegeneracy;
%see Theorem \ref{plum2001s} for an explicit statement. % the further generalized version.
%Although both Theorems \ref{theo:plum2001} and \ref{plum2001s} can be applied to problem \eqref{absproblem} with $1<p<2$, Theorem \ref{theo:plum2001} gives us more information.
%Therefore, we apply Theorem \ref{theo:plum2001} to our problem in this paper.

Rigorously enclosing integrals is required at many points in the above-mentioned process, e.g., when estimating the norm of the residual norm $\left\| \Delta \hat{u} + f(\hat{u})  \right\|_{H^{-1}}$ and when computing the operator norm bound of the inverse of the linearized operator $-\Delta - f'(\hat{u}): H^1_0(\Omega) \rightarrow H^{-1}$. 
The integrands contain $f'\left(\hat{u}\right)$ or $f\left(\hat{u}\right)$,
and the smoothness of derivatives of $f$ is useful for avoiding technical difficulties in computing the integrals. 
However, in our situation where $f(u)=|u|^{p-1}u~\left(1<p<2\right)$, such integrations are difficult to calculate with high-precision as well as rigorous estimates because the second derivative of $f\left(v \right)$ is not bounded around the points at which $v = 0$ even for smooth functions $v$.
One of this paper's main contributions is to develop enclosure methods for such irregular integrations required in the process of existence proofs.

%For instance, $V_{N}$ is spanned by a finite element basis or a Fourier-Galerkin basis.

%where $\mathcal{F}_{\hat{u}}'$ is the Fr\'echet derivative of $\mathcal{F}$ at $\hat{u}\in V$.
% for a numerical approximation $\hat{u} \in V_N$

We quote
\cite{watanabe1997verified}
%\cite[Section 3.3]{nakao2011numerical}
where 
a rigorous enclosure was obtained for a solution of $ -\Delta u = \lambda \max\{0,u\} $ ($\lambda \geq 0$) subject to the boundary condition $ u=-1 $.
Although the nonlinearity $t \in \mathbb{R} \mapsto \lambda \max\{0,t\} \in \mathbb{R}$ is not differentiable,
%piecewise linear and 
the operator $u \in H^1_0(\Omega) \mapsto \lambda \max\{0,u\} \in H^{-1}$ is Fr\'echet differentiable at functions that are non-zero almost everywhere.
The enclosure was obtained by a method called {\it FN-Int} % with a piecewise bi-linear polynomial approximation,
based on a fixed-point argument for an infinite-dimensional Newton-type operator, which is split into a finite-dimensional part, treated by verifying numerical linear algebra, and an infinite-dimensional remainder, which is captured by a projection error bound (see \cite{nakaoplumwatanabe2019numerical} for details).
%To confirm the contractility of the operator and apply Schauder's fixed-point theorem,
%FN-Int first separates the operator into a finite-dimensional part and the rest, then checking the contractility of both parts.
%Here, for a Banach space $X$, we say that an operator $T:X\rightarrow X$ has contractility for a set $U \subset X$ if $T(U)\subset U$.
Because the effectiveness of both Theorem \ref{theo:plum2001} and FN-Int depends on the evaluation of several constants and computational implementations, it is very tough to compare them at a general level.
However, the dominant feature of Theorem \ref{theo:plum2001} is that it directly handles the infinite-dimensional Newton's operators without splitting (see Remark \ref{rem:Theorem1proof}) and can be applied to unbounded domains.
%Note that another method, {\it In-Linz}, was proposed \cite{nakao2005numerical}, which is also based on the direct confirmation of the contractility for an infinite-dimensional operator (again see \cite{nakaoplumwatanabe2019numerical} and the references therein).

%We moreover quote \cite{nakao2005numerical} where 
%, a modification of FN-Int, was proposed
%In \cite{nakaoplumwatanabe2019numerical},  is summarized,

%on the basis of the explicit evaluations for 
%However, as mentioned in Remark \ref{rem:Theorem1proof} later, 
%Theorem \ref{theo:plum2001} corresponds to a fixed point argument for the best possible residual form in the sense that **.
%we see that IN-Linz, a modification of FN-Int, is closely related with Theorem \ref{theo:plum2001}.
%We briefly describe the relationship in Appendix \ref{sec:relTheo1andINLinz}.

The rest of this paper is organized as follows.
Section \ref{verificationtheory} shows methods to obtain solution enclosures for problem \eqref{absproblem}.
Section \ref{integration} discusses rigorous numerical integration methods required for implementing actual computation to obtain the enclosures.
In Section \ref{sec/example}, we present a numerical example where a positive solution of \eqref{absproblem} is enclosed when $ p=3/2 $ and $ \Omega=(0,1)^2 $.

\section{Enclosure methods for elliptic problems}\label{verificationtheory}
We begin by introducing the required notation.
We denote $V=H_{0}^{1}\left(\Omega\right)$, and $V^{*}=H^{-1}$.
The $L^{2}$-inner product and norm are simply denoted by $\left(\cdot,\cdot\right)$ and $\|\cdot\|$, respectively, if no confusion arises.
For two Banach spaces $X$ and $Y$,
the space of bounded linear operators from $X$ to $Y$ is denoted by $\mathcal{B} (X,Y)$.
The norm of $T \in \mathcal{B} (X,Y)$ is defined by 
\begin{align}
	\label{eq:normofdual}
	\|T\|_{\mathcal{B}(X, Y)}:=\sup _{0 \neq u \in X} \frac{\|T u\|_{Y}}{\|u\|_{X}}.
\end{align}
%Using this notation, $f:u \mapsto u^p$ is considered as an operator from $V$ to $V^*$.
We define the operator $\mathcal{F}:V\rightarrow V^{*}$ by $\mathcal{F}(u):=-\Delta u- |u|^{p-1} u$, which satisfies
\begin{align}
	\left\langle\mathcal{F}(u),v\right\rangle=\left(\nabla u,\nabla v\right)-\left(|u|^{p-1} u,v\right)~~~{\rm for}~u,v\in V.
\end{align}
The Fr\'echet derivative $ {\mathcal F'_{\varphi}} $ of $ \mathcal{F} $ at $ \varphi \in V $ is given by
\begin{align}
&\langle \mathcal F'_{\varphi}u,v \rangle = \left(\nabla u,\nabla v\right) - p( |\varphi|^{p-1}u,v )  \text{~~~for~all~} u,v \in V. \label{def:derivativecalf}
\end{align}
We look for a solution $ u \in V $ of
\begin{align}
	\mathcal{F}(u)=0\label{gFproblem},%~~{\rm in}~~V^{*},\label{gFproblem}
\end{align}
which is equivalent to the weak form of \eqref{absproblem}.
A norm bound for the embedding $V\hookrightarrow L^{p}\left(\Omega\right)$ is denoted by $C_{p}$, a positive number that satisfies
\begin{align}
\left\|u\right\|_{L^{p}(\Omega)}\leq C_{p}\left\|u\right\|_{V}~~~{\rm for~all}~u\in V.\label{embedding}
\end{align}
Note that $\|u\|_{H^{-1}} \leq C_{p}\|u\|_{L^{p'}}$, $u \in L^{p'}(\Omega)$, holds for $p'$ satisfying $p^{-1}+p'^{-1}=1$ (see, for example, \cite[Section 4]{plum2008}).
Calculating an explicit upper bound for $C_{p}$ is important for our enclosure method.
Corollary \ref{roughboundtheo} provides a formula that gives an upper bound given a bounded domain $\Omega$.
Another alternative formula can be found in \cite[Lemma 7.10]{nakaoplumwatanabe2019numerical}.
%, which can be applied to unbounded domains.

In the following, we discuss a method for enclosing solutions to \eqref{gFproblem} near a numerically computed approximate solution $\hat{u} \in V$ in terms of the norms $\|\cdot\|_V$ and $\|\cdot\|_{L^{\infty}}$.
Although in some place of this section, the regularity $\hat{u} \in L^{\infty}(\Omega)$ is additionally assumed,
this assumption impairs little of the flexibility of actual numerical computation methods.
Moreover, in Subsection \ref{subsecLinf}, we assume the further regularity $\Delta \hat{u} \in L^{2}(\Omega)$ to obtain an $ L^{\infty} $-error bound using a norm bound for the embedding $ H^2(\Omega) \hookrightarrow L^{\infty}(\Omega) $,
and the fact that solutions of problem \eqref{absproblem}, with $p=3/2$, are in $H^2(\Omega)$.

%This section consists of two subsections as mentioned at the end of the previous section.
%the main themes Subsections \ref{subsecH10} and \ref{subsecLinf} are obtaining error estimations in the sense of $H^1_0(\Omega)$ and $L^{\infty}(\Omega)$, respectively.
%Especially in Subsection \ref{subsecH10}, improved Newon-Kantorovich's theorems are firstly provided, followed by a method as a corollary to prove the positivity of the verified solution.
%In Subsection \ref{subsecH10}, an improved Newon-Kantorovich's theorems (Theorems \ref{plum2001s} and \ref{theo:plum2001}) for the verification in the sense of $\left\|\nabla\cdot\right\|_{L^{2}\left(\Omega\right)}$ is introduced and a corollary based on the theorems for confirming the positivity of a verified solution is provided;
%subsequently, some constants and functions required in the theorems are explicitly estimated.

%apply the method summarized in \cite{plum2001computer,plum2008,plum1992explicit} to verified numerical computations for solutions to \eqref{emden_positive}.

\subsection{$H_{0}^{1}$-error estimation}\label{subsecH10}
We use the following theorem, a generalization of the classical Newton-Kantorovich theorem, for obtaining an $H_{0}^{1}$ error estimation of solutions to \eqref{gFproblem}, given an approximation $\hat{u} \in V$.
\begin{theo}[\cite{nakaoplumwatanabe2019numerical,plum2001computer, plum2008}]\label{theo:plum2001}
	Let $\hat{u}\in V$ be some numerical approximate solution to \eqref{absproblem}.
	Suppose that some $\delta>0,\ K>0$, and a non-decreasing function $g$ are known to satisfy
	\begin{align}
	&\left\|\mathcal{F}\left(\hat{u}\right)\right\|_{V^{*}}\leq\delta,\label{zansa}\\
	&\left\|u\right\|_{V}\leq K\left\|\mathcal{F}_{\hat{u}}'u\right\|_{V^{*}}~~~{\rm for~all}~u\in V,\label{inverse}\\
	&\left\|\mathcal{F}_{\hat{u}+u}'-\mathcal{F}_{\hat{u}}'\right\|_{\mathcal{B}(V,V^{*})}\leq g\left(\left\|u\right\|_{V}\right)~~~{\rm for~all}~u\in V,\label{lip}
	\shortintertext{and}
	&g(t)\rightarrow 0~~{\rm as}~~t\rightarrow 0\label{gconv}.
	\end{align}
	Moreover, suppose that, for some $\alpha>0$,
	\begin{align}
	\displaystyle \delta\leq\frac{\alpha}{K}-G\left(\alpha\right)~~and~~Kg\left(\alpha\right)<1,\label{alphacond}
	\end{align}
	where $G(t):=\displaystyle \int_{0}^{t}g(s)ds$.
	Then, there exists a solution $u\in V$ to the equation $\mathcal{F}(u)=0$ satisfying
	\begin{align}
	\left\|u-\hat{u}\right\|_{V}\leq\alpha.\label{al}
	\end{align}
	The solution is moreover nondegenerate and unique under the side condition \eqref{al}.
\end{theo}

\begin{rem}
	In {\rm \cite[Theorem 6.2]{nakaoplumwatanabe2019numerical}} an improved uniqueness-condition is provided, which gives a larger uniqueness area.
\end{rem}

\begin{rem}
The classical Newton-Kantorovich theorem requires the Fr\'echet derivative of $\mathcal{F}$ to be Lipschitz continuous for any two points in $D \subset V$, a ball centered at $\hat{u}$.
% that is expected to contain a target solution $ u $.
%the Fr\'echet differentiability of $\mathcal{F}$ whole in a ball $D \subset V$ centered at $\hat{u}$ that is expected to contain a target solution $ u $.
%Moreover, the Fr\'echet derivative of $\mathcal{F}$ needs to be Lipschitz continuous for any two points in $D$;
In other words, this theorem needs a positive constant $L$ such that $\|\mathcal{F}_{u}'-\mathcal{F}_{v}'\|_{\mathcal{B}(V,V^{*})}\leq L\|u-v\|_{V}~{\rm for~all}~u,v\in D$.
%where $\mathcal{F}_{u}'$ denotes the Fr\'echet derivative of $\mathcal{F}$ at $u$.
However, when $ \hat{u}=0 $ in some parts of $ \Omega $, such $L$ does not exist for $p \in (1,2)$ because $D$ includes functions that vanish in subdomains of positive measure.
%on some places of $\Omega$.
Therefore, it is at least very problematic to use the classical version for the sub-square case.
On the other hand, Theorem $\ref{theo:plum2001}$ with the relaxed Lipschitz-continuity condition \eqref{lip} is well applicable to this case.
%due to the relaxed Lipschitz continuous condition \eqref{gconv}.
%More precisely, Theorem \ref{theo:plum2001} requires the Fr\'echet differentiability of $\mathcal{F}$ whole in $D \subset V$, a ball centered at $\hat{u}$, and an improved Lipschitz continuous condition between fixed $\hat{u}$ and an arbitrary point in $D$; see \eqref{gconv} in Theorem \ref{theo:plum2001}.
%We remark that, if once we prove the existence of a solution on the basis of Theorem $\ref{theo:plum2001}$ (or the classical theorem), the nondegeneracy of the solution are also confirmed at the same time.
\end{rem}

\begin{rem}\label{rem:Theorem1proof}
Theorem $\ref{theo:plum2001}$ was obtained by applying Banach's fixed-point theorem to the following fixed-point equation for the error $v=u-\hat{u}$
\begin{align}
	v=-\mathcal{F}_{\hat{u}}'^{-1}[\mathcal{F}(\hat{u})+\{\mathcal{F}(\hat{u}+v)-\mathcal{F}(\hat{u})-\mathcal{F}_{\hat{u}}'[v]\}]=: T(v).
\end{align}
Therefore, we can interpret that, under the required conditions \eqref{zansa}--\eqref{alphacond}, the existence of a fixed point $v$ of Newton's operator $T$ is ensured nearby $0$. 
Another version of the theorem based on Schauder’s fixed-point theorem was also proposed in {\rm \cite{nakaoplumwatanabe2019numerical,plum2008}}.
This requires the continuity and the compactness of $T$, but does not need the second inequality assumption in \eqref{alphacond}.
See {\rm\cite{nakaoplumwatanabe2019numerical}} for a more detailed discussion.
%which is applicable to the case in which $T$ is continuous and compact.
%The version presented as Theorem $\ref{theo:plum2001}$ is proved by applying Banach's fixed-point theorem to the self-adjoint operator $\Phi^{-1}\mathcal{F}_{\hat{u}}'$. 
\end{rem}

By adding a further condition to Theorem \ref{theo:plum2001}, the positivity of the enclosed solution $ u $ can be confirmed.
%apply to positive solutions.
Before describing the condition, 
we define the positive and the negative parts $v_{+},v_{-} \in V$ of a function $v \in V$ by
\begin{align*}
	v_{+}=\displaystyle \max\left\{v,0\right\}
	~~{\rm and}~~
	v_{-}=\displaystyle \max\left\{-v,0\right\},
\end{align*}
respectively.
The following result is obtained by unifying Theorem \ref{theo:plum2001} and \cite[Corollary 2.5]{tanaka2020numerical}.

\begin{coro}\label{up_positive_newkan}
	Let $\hat{u}\in V$ be some approximate solution to \eqref{absproblem}.
	%Suppose that $\alpha>0$ exists that satisfies the assumption of Theorem \ref{theo:plum2001}.	
	Suppose that some $\delta>0,\ K>0$, and a non-decreasing function $g$ satisfying \eqref{zansa}--\eqref{gconv} are known,
	and that some $\alpha>0$ exists satisfying \eqref{alphacond}.
	If we have
	\begin{align}
		C_{p+1}^{2} (\left\|\hat{u}_{-}\right\|_{L^{p+1}}+C_{p+1}\alpha)^{p-1} < 1
		\label{uptheo_2_katei}
	\end{align}
	then there exists a positive solution $u\in V$ to \eqref{gFproblem} satisfying \eqref{al}.
	Furthermore, this solution is nondegenerate and unique under the side condition \eqref{al}.
\end{coro}
\begin{proof}
	The existence, nondegeneracy, and local uniqueness of the solution $u$ satisfying \eqref{al} is ensured by Theorem {\rm \ref{theo:plum2001}}.
%	Hence, we are left to prove the positivity of $u$ in $\Omega$.
	The positivity follows from \cite[Corollary 2.5]{tanaka2020numerical}.
\end{proof}

\begin{rem}
	The norm $ \left\|\hat{u}_{-}\right\|_{L^{p+1}} $ is small when $\hat{u}$ is close to a nonnegative function.
	Therefore, if a solution enclosure derived from Theorem {\rm \ref{theo:plum2001}} is sufficiently accurate for almost nonnegative $\hat{u}$,
	the positivity of $ u $ can be confirmed because the left-hand side of \eqref{uptheo_2_katei} is ``small''.
\end{rem}

In the rest of this subsection, we discuss the computation of $\delta$, $K$, and $g$ required in Theorem \ref{theo:plum2001}.
%Note that, when $f\left(u\right)=\left|u\right|^{p-1}u$, the Fr\'echet derivative $\mathcal{F}_{\hat{u}}'$ of $\mathcal{F}$ at $\hat{u}\in V$ is given by
%\begin{align*}
%\left\langle\mathcal{F}_{\hat{u}}'u,v\right\rangle=\left(\nabla u,\nabla v\right)-p\left(\left|\hat{u}\right|^{p-1}u,v\right)~~~{\rm for}~u,v\in V.
%\end{align*}
\subsubsection*{Residual bound $\delta$}
For $\hat{u}\in V$ satisfying $\Delta\hat{u}\in L^{2}\left(\Omega\right)$, the residual bound $\delta$ can be computed as
\begin{align*}
C_{2}\left\|\Delta\hat{u}+\left|\hat{u}\right|^{p-1}\hat{u}\right\|_{L^2},
\end{align*}
where $C_2$ is an embedding constant satisfying \eqref{embedding} for $p=2$.
Our numerical experiment discussed in Section \ref{sec/example} uses this evaluation, and we calculate the $L^{2}$-norm rigorously via the numerical integration method described in Section \ref{integration}.

	The condition $\Delta \hat{u} \in L^2 (\Omega)$ is not satisfied, e.g., when we construct $\hat{u}$ with a piecewise linear finite element basis.
	We use the method in \cite[Subsection 7.2]{nakaoplumwatanabe2019numerical} to evaluate the residual norm in such a case.
	The following is a brief description of the evaluation method.
	First, we find an approximation $\rho \in H(\operatorname{div}, \Omega)=\left\{\tau \in L^{2}(\Omega)^{n} : \operatorname{div} \tau \in L^{2}(\Omega)\right\}$ to $\nabla \hat{u}$.
	Then, the residual norm is evaluated as 
	\begin{align*}
		\|{F}(\hat{u})\|_{H^{-1}} &=\|-\Delta \hat{u}-\left|\hat{u}\right|^{p-1}\hat{u}\|_{H^{-1}}, \\
		&=\|-\Delta \hat{u}+\operatorname{div} \rho-\operatorname{div} \rho-\left|\hat{u}\right|^{p-1}\hat{u}\|_{H^{-1}}, \\
		& \leq\|\operatorname{div}(-\nabla \hat{u}+\rho)\|_{H^{-1}}+\|\operatorname{div} \rho+\left|\hat{u}\right|^{p-1}\hat{u}\|_{H^{-1}}, \\
		& \leq\|-\nabla \hat{u}+\rho\|_{L^2}+C_{2}\|\operatorname{div} \rho+\left|\hat{u}\right|^{p-1}\hat{u}\|_{L^2},
	\end{align*}
	where we used $\|\mathrm{div} \omega\|_{H^{-1}} \leq\|\omega\|_{L^2}$ for $\omega \in H(\mathrm{div}, \Omega)$.
	%As mentioned in \cite[Subsection 7.2]{nakaoplumwatanabe2019numerical},
	Note that $\rho$ can be computed without additional computational resources when we use mixed finite element methods to construct $\hat{u}$.

\subsubsection*{Bound $K$ for the operator norm of $\mathcal{F}_{\hat{u}}^{\prime-1}$}
We compute a bound $K$ for the operator norm of $\mathcal{F}_{\hat{u}}^{\prime-1}$ via the following theorem,
proving simultaneously that this inverse operator exists and is defined on the whole of $V^{*}$.
%Here, for operator $T,\ \sigma(T)$ and $\sigma_{p}(T)$ denote the spectrum and the point spectrum of $T$, respectivily.
\begin{theo}\label{invtheo}
Let $\Phi:V\rightarrow V^{*}$ be the canonical isometric isomorphism, that is, $\Phi$ is given by
\begin{align*}
	\left\langle\Phi u,v\right\rangle:=\left(u,v\right)_{V}
	=\left(\nabla u,\nabla v\right)%_{L^{2}(\Omega)}
	~~~{\rm for}~u,v\in V.
\end{align*}
Suppose that $\hat{u} \neq 0$ a.e. in $\Omega$.
Then, there exists an orthonormal basis $\{\psi_k\}_{k \in \mathbb{N}}$ of $V$ consisting of eigenfunctions of $\Phi^{-1}\mathcal{F}_{\hat{u}}'$,
and the associated eigenvalues $\mu_k$ form a monotonically non-decreasing sequence in $(-\infty,1)$ converging to $1$.
If
\begin{align}
	\label{mu0}
	\displaystyle \mu_{0}
	:=\min\left\{|\mu_k|\ : k \in \mathbb{N} \right\}
	>0,
\end{align}
%\ \mu\in\sigma_{p}\left(\Phi^{-1}\mathcal{F}_{\hat{u}}'\right)\cup\{1\}\right\
%with $\sigma_{p}$ denoting the point spectrum,
%with $\sigma_{p}(\Phi^{-1}\mathcal{F}_{\hat{u}}')$ denoting the point spectrum of $\Phi^{-1}\mathcal{F}_{\hat{u}}'$,
then the inverse of $\mathcal{F}_{\hat{u}}':V \rightarrow V^*$ exists, and
\begin{align}
	\label{Ktheo}
	\left\|\mathcal{F}_{\hat{u}}^{\prime-1}\right\|_{B(V^{*},V)}\leq\mu_{0}^{-1}.
\end{align}
\end{theo}
\begin{proof}
We prove this theorem by adopting a theory of Fredholm operators; that is, we have recourse to the fact that the injectivity and the surjectivity of a Fredholm operator are equivalent.

The operator $N:=\Phi-\mathcal{F}_{\hat{u}}'$ from $V$ to $V^{*}$ is given by $\left\langle Nu,v\right\rangle=p(\left|\hat{u}\right|^{p-1}u,v)$ for all $u,v\in V$.
Thus, $N$ maps $V$ into $L^{2}(\Omega)$;
note that $p \leq 2$ and $n \leq 3$.
Hence, $N:V\rightarrow V^{*}$ is compact due to the compactness of the embedding $L^{2}(\Omega)\hookrightarrow V^{*}$.
Therefore, $\Phi^{-1} N: V \rightarrow V$ is compact, and furthermore, for $u,v \in V$,
\begin{align*}
	(\Phi^{-1}Nu,v)_V = \left\langle Nu,v \right\rangle = p (|\hat{u}|^{p-1}u,v),
\end{align*}
implying that $\Phi^{-1}N$ is $(\cdot,\cdot)_V$-symmetric and positive, since $\hat{u}\neq 0$ a.e. in $\Omega$.
Thus, there exists an orthonormal basis $\{\psi_k\}_{k \in \mathbb{N}}$ of $V$ consisting of eigenfunctions of $\Phi^{-1} N$, such that the associated eigenvalues $\nu_k$ form a monotonically non-increasing sequence in $(0,\infty)$ converging to $0$.
Because $\Phi^{-1}\mathcal{F}_{\hat{u}}' = {\rm id}_V - \Phi^{-1}N$ the first part of the assertion follows with $\mu_k:=1-\nu_k$ ($k\in \mathbb{N}$).
Moreover, the eigenfunctions of $\Phi^{-1}\mathcal{F}_{\hat{u}}'$ associated with the eigenvalues $\mu_k$ are the functions $\psi_k$ again.
Therefore, for $u \in V$,
\begin{align*}
	\left\|\mathcal{F}_{\hat{u}}'u\right\|_{V^{*}}^{2}
	=&\left\|\Phi^{-1}\mathcal{F}_{\hat{u}}' u\right\|_{V}^{2}
	=\sum_{k=1}^{\infty} \mu_k^2 |(u,\psi_k)_V|^2
	%=\int_{-\infty}^{\infty}\mu^{2}d\left(E_{\mu}u,u\right)_{V}\\
	%\geq&\mu_{0}^{2}\int_{-\infty}^{\infty}d\left(E_{\mu}u,u\right)_{V}
	\geq\mu_{0}^{2}\left\|u\right\|_{V}^{2}.
\end{align*}
Thus, $\mathcal{F}_{\hat{u}}'$ is one-to-one, and \eqref{Ktheo} follows as soon as we have proved the surjectivity of $\mathcal{F}_{\hat{u}}'$.
Indeed, the compactness of $\Phi^{-1}N$, together with the identity $\Phi^{-1}\mathcal{F}_{\hat{u}}' = {\rm id}_V - \Phi^{-1}N$, shows that $\Phi^{-1}\mathcal{F}_{\hat{u}}'$ is a Fredholm operator.
Hence, its surjectivity, and thus surjectivity of $\mathcal{F}_{\hat{u}}'$, follows from injectivity.
%************
%Therefore, $\mathcal{F}_{\hat{u}}'$ is a Fredholm operator, and the spectrum $\sigma\left(\Phi^{-1}\mathcal{F}_{\hat{u}}'\right)$ of $\Phi^{-1}\mathcal{F}_{\hat{u}}'$ is given by
%\begin{align*}
%\sigma\left(\Phi^{-1}\mathcal{F}_{\hat{u}}'\right)=1-\sigma\left(\Phi^{-1}N\right)=1-\left\{\sigma_{p}\left(\Phi^{-1}N\right)\cup\{0\}\right\}=\sigma_{p}\left(\Phi^{-1}\mathcal{F}_{\hat{u}}'\right)\cup\{1\}.
%\end{align*}
%Since $\Phi^{-1}\mathcal{F}_{\hat{u}}'$ is $(\cdot,\cdot)_V$-symmetric and defined on the whole of $V$, and thus self-adjoint, we have, for all $u\in V$,
%\begin{align*}
%\left\|\mathcal{F}_{\hat{u}}'u\right\|_{V^{*}}^{2}
%=&\left\|\Phi^{-1}\mathcal{F}_{\hat{u}}'\right\|_{V}^{2}
%=\int_{-\infty}^{\infty}\mu^{2}d\left(E_{\mu}u,u\right)_{V}\\
%\geq&\mu_{0}^{2}\int_{-\infty}^{\infty}d\left(E_{\mu}u,u\right)_{V}
%=\mu_{0}^{2}\left\|u\right\|_{V}^{2},
%\end{align*}
%where $E_{\mu}$ is the resolution of the identity of $\Phi^{-1}\mathcal{F}_{\hat{u}}'$.
%Thus, $\mathcal{F}_{\hat{u}}'$ is one to one and, by the Fredholm property, also onto.
%This implies \eqref{Ktheo}.
\end{proof}
%\begin{rem}
%The property of the spectrum of $\Phi^{-1}\mathcal{F}_{\hat{u}}'$ is more precisely discussed in {\rm \cite[Section 3.3]{plum2009computer}}.
%\end{rem}
The eigenvalue problem $\Phi^{-1}\mathcal{F}_{\hat{u}}'u=\mu u$ in $V$ is equivalent to
\begin{align*}
\left(\nabla u,\nabla v\right)-p(\left|\hat{u}\right|^{p-1}u,v)=\mu\left(\nabla u,\nabla v\right)~~{\rm for~all}~v\in V.
\end{align*}
Setting $\lambda_k=(1-\mu_k)^{-1}$, we see that $(\lambda_{k},\psi_k)$ ($k \in \mathbb{N}$) are the eigenpairs of the problem
%Since $\mu=1$ is already known to be in $\sigma\left(\Phi^{-1}\mathcal{F}_{\hat{u}}'\right)$, it suffices to look for eigenvalues $\mu\neq 1$.
%By setting $\lambda=(1-\mu)^{-1}$, we further transform this eigenvalue problem into
\begin{align}
{\rm Find}~u\in V~{\rm and}~\lambda\in \mathbb{R}~{\rm s.t.}~\left(\nabla u,\nabla v\right)=\lambda(p\left|\hat{u}\right|^{p-1}u,v)~~{\rm for~all}~v\in V.\label{eiglam}
\end{align}
%When we assume that $\hat{u}(x)\neq 0$ for almost all $ x\in\Omega$ (which will be true by numerical construction in our example), and therefore we have $\left|\hat{u}\right|^{p-1}>0$ a.e.~in $\Omega$.
%then \eqref{eiglam} is a regular eigenvalue problem, the spectrum of which consists of a sequence $\{\lambda_{k}\}_{k=1}^{\infty}$ of eigenvalues converging to $+\infty$.
To compute $K$ using Theorem \ref{invtheo},
we explicitly enclose the eigenvalue $\lambda$ of \eqref{eiglam} that minimizes the corresponding absolute value of $|\mu|\left(=|1-\lambda^{-1}|\right)$ by considering the following approximate eigenvalue problem
\begin{align}
{\rm Find}&~u\in V_{N}~{\rm and}~\lambda^{N}\in \mathbb{R}~{\rm s.t.}\nonumber\\
&\left(\nabla u_{N},\nabla v_{N}\right)=\lambda^{N}(p\left|\hat{u}\right|^{p-1}u_{N},v_{N})~~{\rm for~all}~v_{N}\in V_{N},\label{applam}
\end{align}
where $V_{N}$ is a finite-dimensional subspace of $V$.
Note that \eqref{applam} amounts to a matrix eigenvalue problem, the eigenvalues of which can be enclosed by rigorous numerical linear algebra (see, for example, \cite{behnke1991calculation,rump1999book,miyajima2012numerical}).

To estimate the error between the $k$th eigenvalue $\lambda_{k}$ of \eqref{eiglam} and the $k$th eigenvalue $\lambda_{k}^{N}$ of \eqref{applam}, we consider the weak formulation of the Poisson equation
\begin{align}
\left(\nabla u,\nabla v\right)=\left(g,v\right)~~~{\rm for~all}~v\in V,\label{poisson}
\end{align}
given $g\in L^{2}\left(\Omega\right)$.
This equation has a unique solution $u\in V$ for each $g\in L^{2}\left(\Omega\right)$.
Moreover, we introduce the orthogonal projection $P_{N}:V\rightarrow V_{N}$ defined by
\begin{align*}
\left(P_{N}u-u,v_{N}\right)_{V}=0~~~{\rm for~all}~u\in V{\rm~and~}v_{N}\in V_{N}.
\end{align*}
The following theorem enables us to estimate the error between $\lambda_{k}$ and $\lambda_{k}^{N}$.
\begin{theo}[\cite{tanaka2014verified,liu2015framework}]\label{eigtheo}
Suppose that $\hat{u} \in H^1_0(\Omega)\cap L^{\infty}(\Omega)$, and let $C_{N}$ denote a positive number such that
\begin{align}
\left\|u_{g}-P_{N}u_{g}\right\|_{V}\leq C_{N}\left\|g\right\| \label{CN}
\end{align}
for any $g\in L^{2}\left(\Omega\right)$ and the corresponding solution $u_{g}\in V$ to \eqref{poisson}.
Then,
\begin{align*}
\displaystyle \frac{\lambda_{k}^{N}}{\lambda_{k}^{N}C_{N}^{2}\|p\left|\hat{u}\right|^{p-1}\|_{L^{\infty}}+1}\leq\lambda_{k}\leq\lambda_{k}^{N}.
\end{align*}
\end{theo}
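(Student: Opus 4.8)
The plan is to read both eigenvalue problems \eqref{eiglam} and \eqref{applam} as generalized eigenvalue problems for the Rayleigh quotient $R(u):=(\nabla u,\nabla u)/(p|\hat{u}|^{p-1}u,u)$, the first over $V$ and the second over $V_{N}$, and to exploit the Courant--Fischer min-max principle. Write $a(u,v):=(\nabla u,\nabla v)$ (which equals $(u,v)_{V}$), $b(u,v):=(p|\hat{u}|^{p-1}u,v)$, and $M:=\|p|\hat{u}|^{p-1}\|_{L^{\infty}(\Omega)}$. Since $\hat{u}\in L^{\infty}(\Omega)$ and, by numerical construction, $\hat{u}\neq 0$ a.e.\ in $\Omega$, the form $b$ is bounded, symmetric and positive definite on $V$, with the pointwise bound $b(w,w)\leq M\|w\|_{L^{2}(\Omega)}^{2}$ for all $w\in V$; together with the facts already noted for \eqref{eiglam} this yields the characterizations $\lambda_{k}=\min_{S\subseteq V,\,\dim S=k}\max_{0\neq u\in S}R(u)$ and $\lambda_{k}^{N}=\min_{S\subseteq V_{N},\,\dim S=k}\max_{0\neq u\in S}R(u)$. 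The upper estimate $\lambda_{k}\leq\lambda_{k}^{N}$ is then immediate, because in the discrete principle the competing subspaces $S$ are forced to lie in $V_{N}\subseteq V$, which can only enlarge the minimum.

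For the lower estimate I would assemble two ingredients. The first is an Aubin--Nitsche duality self-improvement of \eqref{CN}: since $P_{N}$ is the $a$-orthogonal projection, for $w\in V$ and $g\in L^{2}(\Omega)$ Galerkin orthogonality gives $(w-P_{N}w,g)=a(w-P_{N}w,u_{g})=a(w-P_{N}w,u_{g}-P_{N}u_{g})$, whence \eqref{CN} and the Cauchy--Schwarz inequality yield $\|w-P_{N}w\|_{L^{2}(\Omega)}\leq C_{N}\|w-P_{N}w\|_{V}$, and therefore $b(w-P_{N}w,w-P_{N}w)\leq MC_{N}^{2}\|w-P_{N}w\|_{V}^{2}$ for all $w\in V$. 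The second is that, if $E_{k}$ denotes the span of the first $k$ eigenfunctions of \eqref{eiglam} taken $b$-orthonormal, then the eigen-expansion together with $\lambda_{1}\leq\cdots\leq\lambda_{k}$ gives $a(u,u)\leq\lambda_{k}\,b(u,u)$ for every $u\in E_{k}$.

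With these in hand I would use $P_{N}E_{k}\subseteq V_{N}$ as a trial space in the discrete min-max principle. If $C_{N}^{2}M\lambda_{k}\geq 1$ there is nothing to prove, since then $\lambda_{k}^{N}/(1+\lambda_{k}^{N}C_{N}^{2}M)<(C_{N}^{2}M)^{-1}\leq\lambda_{k}$; so assume $C_{N}^{2}M\lambda_{k}<1$, in which case the two ingredients also force $P_{N}|_{E_{k}}$ to be injective (a nonzero $u\in E_{k}$ with $P_{N}u=0$ would satisfy $b(u,u)\leq MC_{N}^{2}a(u,u)\leq MC_{N}^{2}\lambda_{k}b(u,u)$, a contradiction), so $\dim P_{N}E_{k}=k$. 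For $u\in E_{k}$, splitting $u=P_{N}u+w$ with $w$ $a$-orthogonal to $V_{N}$ and combining Pythagoras in the $a$-norm, the triangle inequality $b(u,u)^{1/2}\leq b(P_{N}u,P_{N}u)^{1/2}+b(w,w)^{1/2}$, the two ingredients, and the elementary estimate $2\sqrt{st}\leq s+t$, one obtains $R(P_{N}u)\leq\lambda_{k}/(1-C_{N}^{2}M\lambda_{k})$. By the Courant--Fischer formula for $\lambda_{k}^{N}$ this gives $\lambda_{k}^{N}\leq\lambda_{k}/(1-C_{N}^{2}M\lambda_{k})$, which rearranges to the asserted lower bound. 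The crux of the argument --- and the main obstacle --- is producing the \emph{quadratic} factor $C_{N}^{2}$ rather than $C_{N}$: a crude operator-norm perturbation estimate only yields an $O(C_{N})$ gap between $\lambda_{k}$ and $\lambda_{k}^{N}$, which would be far too weak for verification, and it is precisely the Aubin--Nitsche duality together with the sharp algebraic closing that upgrades it to $O(C_{N}^{2})$; the remaining steps are routine spectral theory.
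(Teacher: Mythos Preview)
Your argument is correct. For the upper bound you invoke the Courant--Fischer min-max principle, which is exactly what the paper does (it calls this the Rayleigh--Ritz bound). For the lower bound the paper gives no proof at all: it merely cites \cite{tanaka2014verified,liu2015framework} and remarks that the former needs $H^2$-regularity while the latter does not. Your proposal therefore supplies what the paper omits, and the strategy you use --- Aubin--Nitsche duality to obtain $b(w,w)\le MC_N^{2}a(w,w)$ for $w=u-P_Nu$, followed by testing the discrete min-max with $P_NE_k$ --- is precisely the mechanism behind Liu's framework \cite{liu2015framework}.

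One point you might make explicit, since your phrase ``combining \ldots\ one obtains'' compresses the crucial step: writing $A=a(u,u)$, $B=b(u,u)$, $A_w=a(w,w)$, $B_w=b(w,w)$ and $\theta=MC_N^{2}\lambda_k$, Pythagoras gives $a(P_Nu,P_Nu)=A-A_w$, the triangle inequality gives $b(P_Nu,P_Nu)\ge(\sqrt{B}-\sqrt{B_w})^2$, and your two ingredients give $A\le\lambda_kB$ and $B_w\le MC_N^{2}A_w\le\theta x B$ with $x:=A_w/A\in[0,1]$. Hence
\[
R(P_Nu)\le\frac{\lambda_k(1-x)}{(1-\sqrt{\theta x})^{2}},
\]
and the desired bound $R(P_Nu)\le\lambda_k/(1-\theta)$ reduces to the inequality $(1-x)(1-\theta)\le(1-\sqrt{\theta x})^{2}$, which after expansion is exactly your AM--GM estimate $2\sqrt{\theta x}\le\theta+x$. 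This is the ``sharp algebraic closing'' you allude to, and it is what produces the quadratic factor $C_N^{2}$ rather than $C_N$.
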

The right inequality is well known as Rayleigh-Ritz bound, which is derived from the min-max principle:
\begin{align*}
\displaystyle \lambda_{k}=\min_{H_{k}\subset V}\left(\max_{v\in H_{k}\backslash\{0\}}\frac{\left\|\nabla v\right\|^{2}}{\left\|av\right\|^{2}}\right)\leq\lambda_{k}^{N},
\end{align*}
where we set $a=\sqrt{p\left|\hat{u}\right|^{p-1}}$ and the minimum is taken over all $k$-dimensional subspaces $H_{k}$ of $V$.
Moreover, the proof of the left inequality can be found in \cite{tanaka2014verified,liu2015framework}.
Assuming the $H^{2}$-regularity of solutions to \eqref{poisson} (given, e.g., when $\Omega$ is convex {\rm \cite{grisvard2011elliptic}}), \cite[Theorem 4]{tanaka2014verified} ensures the left inequality.
A more general statement that does not require the $H^{2}$-regularity can be found in \cite[Theorem 2.1]{liu2015framework}.

\begin{rem}
When the $H^{2}$-regularity of solutions to \eqref{poisson} is confirmed a priori, e.g., when $\Omega$ is convex {\rm \cite{grisvard2011elliptic}}, \eqref{CN} can be replaced by
\begin{align}
\label{eq:CN-H2regular}
\left\|u-P_{N}u\right\|_{V}\leq C_{N}\left\|-\Delta u\right\|~~~{\rm for~all}~u\in H^{2}(\Omega)\cap V.
\end{align}
The computation of an explicit value of $C_{N}$ for a given subspace $V_{N}$ is discussed in Section $\ref{sec/example}$.
\end{rem}

\begin{rem}
	Another effective approach to calculating lower bounds for the eigenvalues $\lambda$ of \eqref{eiglam} is the homotopy-based method proposed by the second author of this paper {\rm \cite{plum1991bounds, plum1994enclosures, nakaoplumwatanabe2019numerical}}.
	%The second author of this paper designed the homotopy based method for estimating $\lambda$ {\rm \cite{plum1991bounds, plum1994enclosures}}.
	This approach does not require the constant $C_N$ and thus is applicable even to unbounded domains $\Omega$.
	Moreover, the accuracy of eigenvalues can be further improved using the Temple-Lehmann-Goerisch method; see {\rm\cite{plum1994enclosures}} and {\rm \cite[Theorem 10.31]{nakaoplumwatanabe2019numerical}}.
	This refinement can be applied using the rough lower bound from Theorem {\rm\ref{eigtheo}} if more accuracy is required.
\end{rem}

\subsubsection*{Lipschitz bound-function $g$ for $\mathcal{F}_{\hat{u}}'$}
Explicitly constructing $g$ that satisfies \eqref{lip} and \eqref{gconv} in Theorem $\ref{theo:plum2001}$ is required for our enclosure method.
The following lemma is used for the construction.
\begin{lem}\label{ablem}
For $a,b\in \mathbb{R}$ and $q\in\left(0,1\right)$,
\begin{align*}
\left|\left|a+b\right|^{q}-\left|a\right|^{q}\right|\leq\left|b\right|^{q}.
\end{align*}
\end{lem}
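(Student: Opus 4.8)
The plan is to reduce the claim to the subadditivity of the map $t\mapsto t^{q}$ on $[0,\infty)$ and then apply the triangle inequality twice, once in $a$ and once in $a+b$. First I would record the elementary scalar fact that $s^{q}\ge s$ for every $s\in[0,1]$ and $q\in(0,1)$: this is immediate since $s^{q}=s\cdot s^{q-1}$ and $s^{q-1}\ge 1$ for $s\in(0,1]$ (because $s\le 1$ and $q-1<0$), while the case $s=0$ is trivial. From this, for any $x,y\ge 0$ with $x+y>0$, applying the inequality to $s=x/(x+y)$ and $s=y/(x+y)$ and multiplying through by $(x+y)^{q}>0$ gives
\begin{align*}
\frac{x^{q}}{(x+y)^{q}}+\frac{y^{q}}{(x+y)^{q}}\ge\frac{x}{x+y}+\frac{y}{x+y}=1,
\qquad\text{i.e.}\qquad
(x+y)^{q}\le x^{q}+y^{q};
\end{align*}
the case $x=y=0$ is obvious. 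Hence $t\mapsto t^{q}$ is subadditive on $[0,\infty)$, and it is of course also non-decreasing there.

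Next I would combine this with the triangle inequality. Since $|a+b|\le|a|+|b|$ and $t\mapsto t^{q}$ is non-decreasing and subadditive,
\begin{align*}
|a+b|^{q}\le\bigl(|a|+|b|\bigr)^{q}\le|a|^{q}+|b|^{q},
\end{align*}
which yields $|a+b|^{q}-|a|^{q}\le|b|^{q}$. For the opposite sign I would use $|a|=\bigl|(a+b)+(-b)\bigr|\le|a+b|+|b|$ in exactly the same way to get $|a|^{q}\le|a+b|^{q}+|b|^{q}$, i.e.\ $|a|^{q}-|a+b|^{q}\le|b|^{q}$. Putting the two one-sided bounds together gives $\bigl||a+b|^{q}-|a|^{q}\bigr|\le|b|^{q}$, as claimed.

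The argument is entirely elementary, and the only ingredient that is not completely immediate is the subadditivity of $t\mapsto t^{q}$; I would regard isolating the scalar estimate $s^{q}\ge s$ on $[0,1]$ as the single substantive step, with everything else reducing to the triangle inequality applied symmetrically to $a$ and to $a+b$. I do not anticipate any genuine obstacle. If one prefers to avoid the normalization trick in the proof of subadditivity, an alternative is to fix $y\ge0$, set $\varphi(x):=x^{q}+y^{q}-(x+y)^{q}$, and note $\varphi(0)=0$ together with $\varphi'(x)=q\bigl(x^{q-1}-(x+y)^{q-1}\bigr)\ge0$ for $x>0$, since $x\le x+y$ and $q-1<0$; this gives $\varphi\ge0$ on $[0,\infty)$ and hence the same inequality.
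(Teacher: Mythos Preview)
Your proof is correct and follows essentially the same approach as the paper: establish the subadditivity $(\alpha+\beta)^{q}\le\alpha^{q}+\beta^{q}$ on $[0,\infty)$, combine it with the triangle inequality to get the one-sided bound $|a+b|^{q}-|a|^{q}\le|b|^{q}$, and then symmetrize via the substitution $(a,b)\mapsto(a+b,-b)$ to obtain the reverse inequality. The only minor technical difference is in how subadditivity is shown: the paper writes $(\alpha+\beta)^{q}-\alpha^{q}=q\int_{0}^{\beta}(\alpha+t)^{q-1}\,dt\le q\int_{0}^{\beta}t^{q-1}\,dt=\beta^{q}$, whereas you use the homogeneity/normalization trick $s^{q}\ge s$ on $[0,1]$ (and mention the equivalent derivative argument, which is the differentiated form of the paper's integral comparison).
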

\begin{proof}
For $\alpha,\beta\in[0,\infty)$ we have
\begin{align*}
(\displaystyle \alpha+\beta)^{q}-\alpha^{q}=q\int_{0}^{\beta}(\alpha+t)^{q-1}dt\leq q\int_{0}^{\beta}t^{q-1}dt=\beta^{q},
\end{align*}
which readily gives
\begin{align*}
\left|a+b\right|^{q}-\left|a\right|^{q}\leq\left|b\right|^{q}~~~{\rm for}~a,b\in \mathbb{R}.
\end{align*}
Redefining terms, this inequality also implies
\begin{align*}
\left|a\right|^{q}-\left|a+b\right|^{q}=\left|(a+b)+(-b)\right|^{q}-\left|a+b\right|^{q}\leq\left|-b\right|^{q}=\left|b\right|^{q}~~~{\rm for}~a,b\in \mathbb{R}
\end{align*}
and hence the assertion.
\end{proof}

The following theorem gives us an explicit function $g$ in Theorem $\ref{theo:plum2001}$ for the nonlinearity $f\left(u\right)=\left|u\right|^{p-1}u~(1<p<2)$.
%\sidenotetext{\footnotesize About Theorem 2.7\\[3pt]
%Sorry, I think I missed the condition $q,$\,$r,$\,$s\geq 1$ to apply H\"{o}lder's inequality.
%Is the statment is correct now?
%}
\if0
\begin{theo}\label{selectionG}
	For $1<p<2$, we may select
	\begin{align}
		G\left(t\right)=C_{r}C_{s}C_{q(p-1)}^{p-1}t^{p}\label{G}
	\end{align}
	to satisfy \eqref{lipG} and \eqref{convG} in Theorem $\ref{theo:plum2001}$, where $q,r,s$ are positive numbers that satisfy $q^{-1}+r^{-1}+s^{-1}=1$ and $q\left(p-1\right)\geq 1$.
\end{theo}
\begin{proof}
For fixed $\hat{u}\in V$, the left hand side of \eqref{lipG} is written as
\begin{align*}
	&\displaystyle \left\|\mathcal{F}'(\hat{u}+u)-\mathcal{F}'(\hat{u})-\mathcal{F}_{\hat{u}}'u\right\|_{V^{*}}\\
	=&\sup_{v\in V\backslash\{0\}}\frac{\displaystyle\int_{\Omega}\left\{\left|\hat{u}+u\right|^{p-1}(\hat{u}+u)-\left|\hat{u}\right|^{p-1}\hat{u}-p\left|\hat{u}\right|^{p-1}u\right\}vdx}{\left\|v\right\|_{V}},
\end{align*}
where, for the sake of space saving, we omit ``$(x)$'', for example, simplify $u(x)$ as $u$.
Moreover, it follows from the mean-value theorem that
\begin{align*}
	&p\displaystyle \int_{\Omega}\left\{\left|\hat{u}(x)+u(x)\right|^{p-1}(\hat{u}(x)+u(x))\right.\\
	&~~~~~~~~~~~~~~~~~\left.-\left|\hat{u}(x)\right|^{p-1}\hat{u}(x)-p\left|\hat{u}(x)\right|^{p-1}u(x)\right\}v(x)dx\\
	=&p \displaystyle \int_{\Omega}\int_{0}^{1}\left\{\left|\hat{u}(x)+tu(x)\right|^{p-1}-\left|\hat{u}(x)\right|^{p-1}\right\}dtu(x)v(x)dx\\
	\leq \displaystyle &p\int_{0}^{1}\int_{\Omega}\left|\left|\hat{u}(x)+tu(x)\right|^{p-1}-\left|\hat{u}(x)\right|^{p-1}\right||u(x)||v(x)|dxdt\\
	\leq&\displaystyle p\int_{0}^{1}\left\|\left|\hat{u}+tu\right|^{p-1}-\left|\hat{u}\right|^{p-1}\right\|_{L^{q}(\Omega)}\left\|u\right\|_{L^{r}(\Omega)}\left\|v\right\|_{L^{s}(\Omega)}dt\\
	\leq & p\left\|u\right\|_{L^{q(p-1)}(\Omega)}^{p-1}\left\|u\right\|_{L^{r}(\Omega)}\left\|v\right\|_{L^{s}(\Omega)}\int_{0}^{1}t^{p-1}dt\\
	= &\left\|u\right\|_{L^{q(p-1)}(\Omega)}^{p-1}\left\|u\right\|_{L^{r}(\Omega)}\left\|v\right\|_{L^{s}(\Omega)}
	\leq C_{r}C_{s}C_{q(p-1)}^{p-1}\left\|u\right\|_{V}^{p}\left\|v\right\|_{V},
\end{align*}
since Lemma \ref{ablem} ensures that
\begin{align*}
	&\left\|\left|\hat{u}+tu\right|^{p-1}-\left|\hat{u}\right|^{p-1}\right\|_{L^{q}(\Omega)}\\
	=&\left(\int_{\Omega}\left|\left|\hat{u}(x)+tu(x)\right|^{p-1}-\left|\hat{u}(x)\right|^{p-1}\right|^{q}dx\right)^{1/q}\\
	\leq&\left(\int_{\Omega}\left|tu(x)\right|^{q(p-1)}dx\right)^{1/q}
	=t^{p-1}\left\|u\right\|_{L^{q(p-1)}(\Omega)}^{p-1}.
\end{align*}
Hence, we have
\begin{align*}
	\left\|\mathcal{F}'(\hat{u}+u)-\mathcal{F}'(\hat{u})-\mathcal{F}_{\hat{u}}'u\right\|_{V^{*}}\leq C_{r}C_{s}C_{q(p-1)}^{p-1}\left\|u\right\|_{V}^{p}=G\left(\left\|u\right\|_{V}\right).
\end{align*}
\end{proof}
\fi

\begin{theo}\label{selectiong}
	For $1<p<2$, the function
	\begin{align}
	g\left(t\right)=pC_{r}C_{s}C_{q(p-1)}^{p-1}t^{p-1}\label{g}
	\end{align}
	satisfies \eqref{lip} and \eqref{gconv} in Theorem $\ref{theo:plum2001}$, where $q,r,s$ are positive numbers that satisfy $q^{-1}+r^{-1}+s^{-1}=1$ and $q\left(p-1\right)\geq 1$.
\end{theo}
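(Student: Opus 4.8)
The plan is to work directly from the explicit formula for the Fr\'echet derivative. For $w\in V$ and $u,v\in V$, subtracting the two expressions for $\mathcal{F}_{\hat u}'$ gives
\begin{align*}
\left\langle\left(\mathcal{F}_{\hat u+w}'-\mathcal{F}_{\hat u}'\right)u,v\right\rangle=-p\int_{\Omega}\left(\left|\hat u+w\right|^{p-1}-\left|\hat u\right|^{p-1}\right)uv\,dx,
\end{align*}
so bounding $\left\|\mathcal{F}_{\hat u+w}'-\mathcal{F}_{\hat u}'\right\|_{\mathcal{B}(V,V^{*})}$ reduces to estimating this integral uniformly over $u,v$ in the unit ball of $V$.

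First I would apply Lemma \ref{ablem} with the exponent $p-1\in(0,1)$ (available since $1<p<2$), pointwise in $x$, to get $\left|\left|\hat u(x)+w(x)\right|^{p-1}-\left|\hat u(x)\right|^{p-1}\right|\leq\left|w(x)\right|^{p-1}$ for a.e.\ $x\in\Omega$; hence the integral above is bounded in absolute value by $p\int_{\Omega}\left|w\right|^{p-1}\left|u\right|\left|v\right|\,dx$. Next I would apply the generalized H\"older inequality with the three exponents $q,r,s$ (legitimate because $q^{-1}+r^{-1}+s^{-1}=1$), using $\left\|\,\left|w\right|^{p-1}\right\|_{L^{q}(\Omega)}=\left\|w\right\|_{L^{q(p-1)}(\Omega)}^{p-1}$, to obtain
\begin{align*}
p\int_{\Omega}\left|w\right|^{p-1}\left|u\right|\left|v\right|\,dx\leq p\left\|w\right\|_{L^{q(p-1)}(\Omega)}^{p-1}\left\|u\right\|_{L^{r}(\Omega)}\left\|v\right\|_{L^{s}(\Omega)}.
\end{align*}
The assumption $q(p-1)\geq 1$ is exactly what makes the embedding $V\hookrightarrow L^{q(p-1)}(\Omega)$, with constant $C_{q(p-1)}$, available; combining it with the embeddings into $L^{r}(\Omega)$ and $L^{s}(\Omega)$ (constants $C_{r},C_{s}$) via \eqref{embedding} yields
\begin{align*}
\left|\left\langle\left(\mathcal{F}_{\hat u+w}'-\mathcal{F}_{\hat u}'\right)u,v\right\rangle\right|\leq pC_{r}C_{s}C_{q(p-1)}^{p-1}\left\|w\right\|_{V}^{p-1}\left\|u\right\|_{V}\left\|v\right\|_{V}.
\end{align*}

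Taking the supremum over $u,v$ in the unit ball of $V$ and recalling the definition of the norm on $V^{*}$ gives $\left\|\mathcal{F}_{\hat u+w}'-\mathcal{F}_{\hat u}'\right\|_{\mathcal{B}(V,V^{*})}\leq g\left(\left\|w\right\|_{V}\right)$ with $g$ as in \eqref{g}, which is \eqref{lip}; and since $p-1>0$ and the coefficient is positive, $g$ is non-decreasing and $g(t)\to 0$ as $t\to 0$, giving \eqref{gconv}. I do not expect a serious obstacle: the argument is essentially Lemma \ref{ablem} followed by H\"older and the Sobolev embeddings. The only points requiring a little care are the bookkeeping between the duality pairing and the operator norm on $\mathcal{B}(V,V^{*})$, and checking that the Lebesgue exponents $q(p-1),r,s$ are all admissible for the embedding of $V=H_{0}^{1}(\Omega)$ when $n\leq 3$ (so that $C_{q(p-1)},C_{r},C_{s}$ are finite), which is where the condition $q(p-1)\geq 1$ together with the freedom in choosing $r$ and $s$ enters.
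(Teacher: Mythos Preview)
Your proposal is correct and follows essentially the same route as the paper: the only cosmetic difference is that you apply Lemma~\ref{ablem} pointwise before H\"older, whereas the paper applies H\"older first and then invokes Lemma~\ref{ablem} inside the $L^{q}$-norm estimate. The resulting bound and the handling of the embedding constants are identical.
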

\begin{proof}
For fixed $\hat{u}\in V$, the left-hand side of \eqref{lip} is written as
\begin{align*}
\displaystyle \left\|\mathcal{F}_{\hat{u}+u}'-\mathcal{F}_{\hat{u}}'\right\|_{\mathcal{B}(V,V^{*})}=p\sup_{v,\phi\in V\backslash\{0\}}\frac{\left|\left((\left|\hat{u}+u\right|^{p-1}-\left|\hat{u}\right|^{p-1})v,\phi\right)\right|}{\left\|v\right\|_{V}\left\|\phi\right\|_{V}}.
\end{align*}
We have
\begin{align*}
\left|\left((\left|\hat{u}+u\right|^{p-1}-\left|\hat{u}\right|^{p-1})v,\phi\right)\right|&\leq\left\|\left|\hat{u}+u\right|^{p-1}-\left|\hat{u}\right|^{p-1}\right\|_{L^{q}}\left\|v\right\|_{L^{r}}\left\|\phi\right\|_{L^{s}}\\
&\leq C_{r}C_{s}\left\|\left|\hat{u}+u\right|^{p-1}-\left|\hat{u}\right|^{p-1}\right\|_{L^{q}}\left\|v\right\|_{V}\left\|\phi\right\|_{V},
\end{align*}
and Lemma \ref{ablem} ensures that
\begin{align*}
\left\|\left|\hat{u}+u\right|^{p-1}-\left|\hat{u}\right|^{p-1}\right\|_{L^{q}}&=\left(\int_{\Omega}\left|\left|\hat{u}(x)+u(x)\right|^{p-1}-\left|\hat{u}(x)\right|^{p-1}\right|^{q}dx\right)^{1/q}\\
&\leq\left(\int_{\Omega}\left|u(x)\right|^{q(p-1)}dx\right)^{1/q}=\left\|u\right\|_{L^{q(p-1)}}^{p-1}.
\end{align*}
Therefore,
\begin{align*}
\left\|\mathcal{F}_{\hat{u}+u}'-\mathcal{F}_{\hat{u}}'\right\|_{\mathcal{B}(V,V^{*})}\leq pC_{r}C_{s}C_{q(p-1)}^{p-1}\left\|u\right\|_{V}^{p-1}=g\left(\left\|u\right\|_{V}\right).
\end{align*}
\end{proof}
%\begin{rem}
%	In fact, 
%	$\displaystyle \frac{d}{dt}G(t)=g(t)$
%	...
%\end{rem}
%%%%%%%%%%%%%%%%%%%%%%%%%%%%%%%%%%%%%%%%%%%%%%%%%%%%%%%%%%%%%%%%%%%%%%%%%%%%%%%%%%%%%%%%%%%%%
%%%%%%%%%%%%%%%%%%%%%%%%%%%%%%%%%%%%%%%%%%%%%%%%%%%%%%%%%%%%%%%%%%%%%%%%%%%%%%%%%%%%%%%%%%%%%
%%%%%%%%%%%%%%%%%%%%%%%%%%%%%%%%%%%%%%%%%%%%%%%%%%%%%%%%%%%%%%%%%%%%%%%%%%%%%%%%%%%%%%%%%%%%%
%%%%%%%%%%%%%%%%%%%%%%%%%%%%%%%%%%%%%%%%%%%%%%%%%%%%%%%%%%%%%%%%%%%%%%%%%%%%%%%%%%%%%%%%%%%%%
%%%%%%%%%%%%%%%%%%%%%%%%%%%%%%%%%%%%%%%%%%%%%%%%%%%%%%%%%%%%%%%%%%%%%%%%%%%%%%%%%%%%%%%%%%%%%
%%%%%%%%%%%%%%%%%%%%%%%%%%%%%%%%%%%%%%%%%%%%%%%%%%%%%%%%%%%%%%%%%%%%%%%%%%%%%%%%%%%%%%%%%%%%%
%%%%%%%%%%%%%%%%%%%%%%%%%%%%%%%%%%%%%%%%%%%%%%%%%%%%%%%%%%%%%%%%%%%%%%%%%%%%%%%%%%%%%%%%%%%%%
%%%%%%%%%%%%%%%%%%%%%%%%%%%%%%%%%%%%%%%%%%%%%%%%%%%%%%%%%%%%%%%%%%%%%%%%%%%%%%%%%%%%%%%%%%%%%
\subsection{$L^{\infty}$-error estimation}\label{subsecLinf}
In this subsection, we discuss a method that gives an $L^{\infty}$-error bound for a solution of \eqref{absproblem} from a known $H_{0}^{1}$ error bound.
More precisely, we compute an explicit bound for $\left\|u-\hat{u}\right\|_{L^{\infty}}$ for a solution $u\in V$ of \eqref{absproblem} satisfying
\begin{align}
\left\|u-\hat{u}\right\|_{V}\leq\alpha,\label{h10error}
\end{align}
given $\alpha>0$ and $\hat{u}\in V$.
We now assume that $\Omega \subset \mathbb{R}^2$ is convex and polygonal.
This condition gives the $H^{2}$-regularity of solutions to \eqref{absproblem} and therefore ensures their boundedness a priori (see, for example, \cite{grisvard2011elliptic}).
%More precisely, when $\Omega$ is a convex polygonal domain, a weak solution $u\in V$ to \eqref{poisson} with $g\in L^{2}\left(\Omega\right)$ is $H^{2}$-regular .
A solution $u$ satisfying \eqref{h10error} can be written in the form $ u=\hat{u}+\alpha\omega$ with some $\omega\in V,\ \left\|\omega\right\|_{V} \leq1$.
Moreover, $\omega$ satisfies
\begin{align*}
\left\{\begin{array}{l l}
-\Delta\alpha\omega=\left|\hat{u}+\alpha\omega\right|^{p-1}\left(\hat{u}+\alpha\omega\right)+\Delta\hat{u} &\mathrm{i}\mathrm{n}\ \Omega,\\
\omega=0 &\mathrm{on}\ \partial\Omega,
\end{array}\right.
\end{align*}
and therefore has the $H^{2}$-regularity if $\Delta\hat{u}\in L^{2}(\Omega)$.
We use the following theorem to obtain an $L^{\infty}$-error bound.
\begin{theo}[{\cite[Theorem 1]{plum1992explicit}}]\label{linf}
For all $u\in H^{2}\left(\Omega\right)$,
\begin{align*}
\|u\|_{L^{\infty}}\le c_{0}\|u\|+c_{1}\|\nabla u\|+c_{2}\|u_{xx}\|
\end{align*}
with
\begin{align*}
c_{j}=\displaystyle \frac{\gamma_{j}}{\left|\overline{\Omega}\right|}\left[\max_{x_{0}\in\overline{\Omega}}\int_{\overline{\Omega}}|x-x_{0}|^{2j}dx\right]^{1/2},~(j=0,1,2),
\end{align*}
where $u_{xx}$ denotes the Hesse matrix of $u,\ \left|\overline{\Omega}\right|$ is the measure of $\overline{\Omega}$, and
\begin{align*}
\gamma_{0}=1,~\gamma_{1}=1.1548,~\gamma_{2}=0.22361.
\end{align*}
For $n=3$, other values of $\gamma_{0},~\gamma_{1},$ and $\gamma_{2}$ have to be chosen $($see {\rm \cite[Theorem 1]{plum1992explicit}}$)$.
\end{theo}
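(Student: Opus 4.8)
The plan is to reduce the estimate to smooth functions and then read it off from a Taylor‑type pointwise representation formula, in the spirit of \cite{plum1992explicit}. Since $\Omega$ is bounded and convex, it has Lipschitz boundary, so $C^{\infty}(\overline{\Omega})$ is dense in $H^{2}(\Omega)$. Once the inequality is proved for $u\in C^{2}(\overline{\Omega})$, applying it to the differences $u_{k}-u_{j}$ of an approximating sequence $(u_{k})\subset C^{2}(\overline{\Omega})$ shows that $(u_{k})$ is Cauchy in $L^{\infty}(\Omega)$; its uniform limit is then a continuous representative of $u$, and the inequality passes to the limit. (This is not circular: we only use that a uniform limit of continuous functions is continuous, not an a priori embedding.) Hence it suffices to treat $u\in C^{2}(\overline{\Omega})$.

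Fix $x_{0}\in\overline{\Omega}$. By convexity, for every $x\in\overline{\Omega}$ the segment from $x$ to $x_{0}$ lies in $\overline{\Omega}$, so Taylor's formula with integral remainder gives
\begin{align*}
u(x_{0}) = u(x) + \nabla u(x)\cdot(x_{0}-x) + \int_{0}^{1}(1-s)\,(x_{0}-x)^{\top}u_{xx}\bigl(x+s(x_{0}-x)\bigr)(x_{0}-x)\,ds .
\end{align*}
Integrating in $x$ over $\overline{\Omega}$ and dividing by $|\overline{\Omega}|$ expresses $u(x_{0})$ as the mean value of $u$, plus a first–order term $\frac{1}{|\overline{\Omega}|}\int_{\overline{\Omega}}\nabla u(x)\cdot(x_{0}-x)\,dx$, plus a remainder integral involving $u_{xx}$. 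Each piece is then estimated by Cauchy–Schwarz: the mean value is bounded by $|\overline{\Omega}|^{-1/2}\|u\|_{L^{2}(\Omega)}$, which is the $c_{0}$–term with $\gamma_{0}=1$; the first–order term, estimated against $|x_{0}-x|$ and then maximized over $x_{0}\in\overline{\Omega}$, produces $\|\nabla u\|_{L^{2}(\Omega)}$ times $\bigl[\max_{x_{0}}\int_{\overline{\Omega}}|x-x_{0}|^{2}dx\bigr]^{1/2}$. For the remainder, the decisive manoeuvre is the substitution $y=x+s(x_{0}-x)$ at fixed $s\in[0,1)$: convexity guarantees $y\in(1-s)\overline{\Omega}+s x_{0}\subseteq\overline{\Omega}$ (so $u_{xx}$ is only evaluated inside $\overline{\Omega}$), the Jacobian contributes $(1-s)^{-n}$, and $|x_{0}-x|=(1-s)^{-1}|x_{0}-y|$ contributes a further $(1-s)^{-2}$; combined with the explicit weight $(1-s)$ and a Cauchy–Schwarz bound of $\int|x_{0}-y|^{2}\|u_{xx}(y)\|\,dy$ by $\bigl(\int|x_{0}-y|^{4}dy\bigr)^{1/2}\|u_{xx}\|_{L^{2}(\Omega)}$, one is left with the $s$–integral $\int_{0}^{1}(1-s)^{1-n/2}\,ds$, which converges exactly because $n\le 3$. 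This gives the $c_{2}$–term up to a constant and also shows why the dimension enters through $n\le 3$.

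The part that goes beyond this bookkeeping is the determination of the \emph{sharp} dimensional constants $\gamma_{1},\gamma_{2}$ (for $n=2$, $\gamma_{1}=1.1548$, $\gamma_{2}=0.22361$). A purely term‑by‑term Cauchy–Schwarz estimate, as sketched above, yields non‑optimal values; the stated ones are obtained in \cite{plum1992explicit} by \emph{not} treating the first–order and remainder contributions separately, but instead integrating by parts inside the remainder integral to trade second‑order derivatives for first‑order ones and then balancing the resulting two contributions through a free parameter. This coupling is exactly why the optimal $\gamma_{2}$ is small while the optimal $\gamma_{1}$ exceeds the value $1$ of the naive bound. I would carry out that optimization following \cite{plum1992explicit}; this is the delicate step, and it is where the precise numerical values of $\gamma_{1},\gamma_{2}$ (and the corresponding constants for $n=3$) come from, the rest being the elementary estimates described above.
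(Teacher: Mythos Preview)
The paper does not prove this theorem at all; it merely quotes it from \cite{plum1992explicit} and moves on to two remarks and Corollary~\ref{Linfcoro}. So there is nothing in the paper to compare your argument against.

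Your outline is essentially the argument of the cited source: Taylor expansion from $x$ to $x_{0}$, averaging in $x$, Cauchy--Schwarz term by term, and the substitution $y=x+s(x_{0}-x)$ exploiting convexity. One point in your bookkeeping should be made explicit: the powers you list, $(1-s)\cdot(1-s)^{-2}\cdot(1-s)^{-n}$, give $(1-s)^{-1-n}$, not $(1-s)^{1-n/2}$. The missing factor comes from the fact that after the substitution the $y$--integration runs only over the contracted set $(1-s)\overline{\Omega}+s x_{0}$, and on that set $\int|x_{0}-y|^{4}\,dy=(1-s)^{4+n}\int_{\overline{\Omega}}|x_{0}-z|^{4}\,dz$; applying Cauchy--Schwarz on the contracted domain therefore contributes an additional $(1-s)^{2+n/2}$, which restores the exponent $1-n/2$ you claim. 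If instead you first enlarge the $y$--domain to all of $\overline{\Omega}$ and only then apply Cauchy--Schwarz, the $s$--integral diverges. Make this order of operations explicit. Apart from that, your sketch (including the remark that the specific values of $\gamma_{1},\gamma_{2}$ require the optimized coupling in \cite{plum1992explicit}) is in line with the original reference.
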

\begin{rem}
The norm of the Hesse matrix of $u$ is precisely defined by
\begin{align*}
\|u_{xx}\|=\sqrt{\sum_{i,j=1}^{2}\left\|\frac{\partial^{2}u}{\partial x_{i}\partial x_{j}}\right\|^{2}}.
\end{align*}
Especially when $\Omega$ is polygonal, we have $\left\|u_{xx}\right\|=\left\|\Delta u\right\|$ for all $u\in H^{2}(\Omega)\cap V$~$($see, for example, {\rm \cite{grisvard2011elliptic}}$)$.
More general domains $\Omega$ are considered in {\rm \cite[Section 6.2.7]{nakaoplumwatanabe2019numerical}}.
\end{rem}
\begin{rem}
Explicit values of each $c_{j}$ are provided for some special domains $\Omega$ in {\rm \cite{plum1992explicit,plum2001computer}}.
E.g., when $\Omega=(0,1)^{2}$, we can choose
\begin{align*}
c_{0}=\displaystyle \gamma_{0}=1,\ 
c_{1}=\sqrt{\frac{2}{3}}\gamma_{1}\leq 0.9429,
{\rm~and~}c_{2}=\frac{\gamma_{3}}{3}\sqrt{\frac{28}{5}} \leq 0.1764.
\end{align*}
\end{rem}

Applying Theorem \ref{linf}, we obtain the following corollary.
\begin{coro}\label{Linfcoro}
Let $u$ be a solution of \eqref{absproblem} satisfying \eqref{h10error} with $\hat{u}\in V$ such that $\Delta \hat{u} \in L^{2}(\Omega)$.
Moreover, let $c_{0},\ c_{1}$, and $c_{2}$ be the constants in Theorem {\rm \ref{linf}}, and $p':=2(p-1)$.
We have
\begin{align}
&\left\|u-\hat{u}\right\|_{L^{\infty}}
\leq c_{0}C_{2}\alpha+c_{1}\alpha+ \nonumber \\
&c_{2}\left\{\max\{1,2^{\frac{p'-1}{2}}\}p\alpha C_{q}\sqrt{\left\|\hat{u}\right\|_{L^{rp'}}^{p'}+\frac{\alpha^{p'}}{p'+1}C_{rp'}^{p'}}+\left\|\Delta\hat{u}+\left|\hat{u}\right|^{p-1}\hat{u}\right\|\right\}\label{coroinequ}
\end{align}
for any $q$ and $r$ satisfying $q\geq 2,\ r\geq(p-1)^{-1}$, and $2q^{-1}+r^{-1}=1$.
\end{coro}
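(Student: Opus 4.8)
The plan is to apply Theorem~\ref{linf} to $w:=u-\hat{u}$. Since $-\Delta u=|u|^{p-1}u$, we have $-\Delta w=|u|^{p-1}u+\Delta\hat{u}\in L^{2}(\Omega)$ and $w=0$ on $\partial\Omega$, so $w\in H^{2}(\Omega)\cap V$ by the $H^{2}$-regularity on convex polygonal domains recalled above. Theorem~\ref{linf} then bounds $\|w\|_{L^{\infty}(\Omega)}$ by $c_{0}\|w\|_{L^{2}(\Omega)}+c_{1}\|\nabla w\|_{L^{2}(\Omega)}+c_{2}\|w_{xx}\|_{L^{2}(\Omega)}$. Here $\|w\|_{L^{2}(\Omega)}\le C_{2}\varepsilon$ by \eqref{embedding} and \eqref{h10error}, and $\|\nabla w\|_{L^{2}(\Omega)}=\|w\|_{V}\le\varepsilon$, which give the first two summands of \eqref{coroinequ}. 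Since $\Omega$ is polygonal, $\|w_{xx}\|_{L^{2}(\Omega)}=\|\Delta w\|_{L^{2}(\Omega)}$, and writing $|u|^{p-1}u+\Delta\hat{u}=\bigl(|u|^{p-1}u-|\hat{u}|^{p-1}\hat{u}\bigr)+\bigl(|\hat{u}|^{p-1}\hat{u}+\Delta\hat{u}\bigr)$ and applying the triangle inequality reduces the third term to the residual $\|\Delta\hat{u}+|\hat{u}|^{p-1}\hat{u}\|_{L^{2}(\Omega)}$ (already present in \eqref{coroinequ}) plus the increment $\bigl\||u|^{p-1}u-|\hat{u}|^{p-1}\hat{u}\bigr\|_{L^{2}(\Omega)}$.

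The core of the argument---and the step I expect to be the main obstacle---is to bound this increment by $\max\{1,2^{(p'-1)/2}\}\,p\varepsilon C_{q}\sqrt{\|\hat{u}\|_{L^{rp'}(\Omega)}^{p'}+\frac{\varepsilon^{p'}}{p'+1}C_{rp'}^{p'}}$. The map $s\mapsto|s|^{p-1}s$ is $C^{1}$ on $\mathbb{R}$ with derivative $p|s|^{p-1}$ (the hypothesis $p>1$ making it genuinely $C^{1}$, whereas its second derivative is unbounded near $s=0$), so the fundamental theorem of calculus gives, a.e.\ in $\Omega$,
\[
\bigl||u|^{p-1}u-|\hat{u}|^{p-1}\hat{u}\bigr|=p|w|\int_{0}^{1}|\hat{u}+tw|^{p-1}\,dt\le p|w|\int_{0}^{1}\bigl(|\hat{u}|+t|w|\bigr)^{p-1}\,dt.
\]
Writing $p-1=p'/2$ and using $(x+y)^{p'}\le\max\{1,2^{p'-1}\}(x^{p'}+y^{p'})$ for $x,y\ge0$ (convexity of $t\mapsto t^{p'}$ when $p'\ge1$, subadditivity when $p'\le1$), the integrand is at most $\max\{1,2^{(p'-1)/2}\}\sqrt{|\hat{u}|^{p'}+t^{p'}|w|^{p'}}$; Cauchy-Schwarz in $t$ over the unit interval together with $\int_{0}^{1}t^{p'}\,dt=(p'+1)^{-1}$ then collapses the $t$-integral, so that pointwise
\[
\bigl||u|^{p-1}u-|\hat{u}|^{p-1}\hat{u}\bigr|\le p\max\{1,2^{(p'-1)/2}\}\,|w|\,\sqrt{|\hat{u}|^{p'}+\tfrac{|w|^{p'}}{p'+1}}.
\]

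It then remains to take the $L^{2}(\Omega)$ norm of this pointwise bound. By H\"older's inequality with the exponents forced by $2q^{-1}+r^{-1}=1$ (equivalently $q^{-1}+(2r)^{-1}=\tfrac12$), the factor $|w|$ is controlled in $L^{q}(\Omega)$ by $C_{q}\varepsilon$ (via \eqref{embedding}, \eqref{h10error}) and the remaining square-root factor in $L^{2r}(\Omega)$; there $\bigl\|\sqrt{h}\bigr\|_{L^{2r}(\Omega)}=\|h\|_{L^{r}(\Omega)}^{1/2}$ with $h=|\hat{u}|^{p'}+(p'+1)^{-1}|w|^{p'}$, and the triangle inequality in $L^{r}(\Omega)$ (legitimate since $r\ge(p-1)^{-1}>1$) together with $\|\hat{u}\|_{L^{rp'}(\Omega)}<\infty$ and $\|w\|_{L^{rp'}(\Omega)}\le C_{rp'}\varepsilon$ (note $rp'=2r(p-1)\ge2$) produce the factor $\sqrt{\|\hat{u}\|_{L^{rp'}(\Omega)}^{p'}+\frac{\varepsilon^{p'}}{p'+1}C_{rp'}^{p'}}$. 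Inserting this into the three-term estimate of Theorem~\ref{linf} yields \eqref{coroinequ}. Beyond the $C^{1}$-but-not-$C^{2}$ nature of the nonlinearity at the origin, the only real delicacy is bookkeeping the three H\"older exponents so that $C_{q}$, $C_{rp'}$ and the power $rp'$ end up exactly where they belong.
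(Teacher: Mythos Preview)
Your argument is correct and follows essentially the same route as the paper's proof: mean value theorem for $s\mapsto|s|^{p-1}s$, Cauchy--Schwarz/Jensen in the $t$-variable, the power inequality $(x+y)^{p'}\le\max\{1,2^{p'-1}\}(x^{p'}+y^{p'})$, and H\"older in $x$ with the exponents $2q^{-1}+r^{-1}=1$, followed by the embeddings $\|w\|_{L^{q}}\le C_{q}\varepsilon$ and $\|w\|_{L^{rp'}}\le C_{rp'}\varepsilon$. The only cosmetic difference is ordering: you first produce a pointwise bound (collapsing the $t$-integral before integrating in $x$), whereas the paper squares, applies Fubini, and uses H\"older inside the $t$-integral; both arrive at the identical estimate.
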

\begin{proof}
Using Theorem \ref{linf}, we have
\begin{align*}
\left\|u-\hat{u}\right\|_{L^{\infty}}&=\alpha\left\|\omega\right\|_{L^{\infty}}\\
&\leq\alpha\left(c_{0}\left\|\omega\right\|+c_{1}\left\|\omega\right\|_{V}+c_{2}\left\|\Delta\omega\right\|\right)\\
&\leq\alpha\left(c_{0}C_{2}+c_{1}+c_{2}\left\|\Delta\omega\right\|\right).
\end{align*}
The last term $\left\|\Delta\omega\right\|$ is estimated via
\begin{align*}
\alpha\left\|\Delta\omega\right\| &=\left\|\left|\hat{u}+\alpha\omega\right|^{p-1}\left(\hat{u}+\alpha\omega\right)+\Delta\hat{u}\right\| \\
&=\left\|\left|\hat{u}+\alpha\omega\right|^{p-1}\left(\hat{u}+\alpha\omega\right)-\left|\hat{u}\right|^{p-1}\hat{u}+\left|\hat{u}\right|^{p-1}\hat{u}+\Delta\hat{u}\right\| \\
&\leq\left\|\left|\hat{u}+\alpha\omega\right|^{p-1}\left(\hat{u}+\alpha\omega\right)-\left|\hat{u}\right|^{p-1}\hat{u}\right\| +\left\|\Delta\hat{u}+\left|\hat{u}\right|^{p-1}\hat{u}\right\|.
\end{align*}
The mean value theorem ensures that
\begin{align*}
&\displaystyle \int_{\Omega}\left(\left|\hat{u}(x)+\alpha\omega(x)\right|^{p-1}\left(\hat{u}(x)+\alpha\omega(x)\right)-\left|\hat{u}(x)\right|^{p-1}\hat{u}(x)\right)^{2}dx\\
&=\displaystyle \int_{\Omega}\left(p\alpha \omega(x)\int_{0}^{1}\left|\hat{u}(x)+\alpha t\omega(x)\right|^{p-1}dt\right)^{2}dx\\
&\displaystyle \leq p^{2}\alpha^{2}\int_{\Omega}\omega(x)^{2}\int_{0}^{1}\left|\hat{u}(x)+\alpha t\omega(x)\right|^{p'}dtdx\\
&=p^{2}\displaystyle \alpha^{2}\int_{0}^{1}\int_{\Omega}\omega(x)^{2}\left|\hat{u}(x)+\alpha t\omega(x)\right|^{p'}dxdt\\
&\displaystyle \leq p^{2}\alpha^{2}\left\|\omega\right\|_{L^{q}}^{2}\int_{0}^{1}\left\|\left|\hat{u}+\alpha t\omega \right|^{p'}\right\|_{L^{r}}dt\\
&=p^{2}\displaystyle \alpha^{2}\left\|\omega\right\|_{L^{q}}^{2}\int_{0}^{1}\left\|\hat{u}+\alpha t\omega \right\|_{L^{rp'}}^{p'}dt\\
&\displaystyle \leq p^{2}\alpha^{2}\left\|\omega\right\|_{L^{q}}^{2}\int_{0}^{1}\left(\left\|\hat{u}\right\|_{L^{rp'}}+\alpha t\left\|\omega\right\|_{L^{rp'}}\right)^{p'}dt\\
&\displaystyle \leq\max\{1,2^{p'-1}\}p^{2}\alpha^{2}\left\|\omega\right\|_{L^{q}}^{2}\left\{\left\|\hat{u}\right\|_{L^{rp'}}^{p'}+\int_{0}^{1}\left(\alpha t\left\|\omega\right\|_{L^{rp'}}\right)^{p'}dt\right\}\\
&=\displaystyle \max\{1,2^{p'-1}\}p^{2}\alpha^{2}\left\|\omega\right\|_{L^{q}}^{2}\left(\left\|\hat{u}\right\|_{L^{rp'}}^{p'}+\frac{\alpha^{p'}}{p'+1}\left\|\omega\right\|_{L^{rp'}}^{p'}\right)\\
&\displaystyle \leq\max\{1,2^{p'-1}\}p^{2}\alpha^{2}C_{q}^{2}\left(\left\|\hat{u}\right\|_{L^{rp'}}^{p'}+\frac{\alpha^{p'}}{p'+1}C_{rp'}^{p'}\right).
\end{align*}
Therefore, it follows that
\begin{align*}
\alpha\left\|\Delta\omega\right\|
\leq\max\{1,2^{\frac{p'-1}{2}}\}p\alpha C_{q}\sqrt{\left\|\hat{u}\right\|_{L^{rp'}}^{p'}+\frac{\alpha^{p'}}{p'+1}C_{rp'}^{p'}}+\left\|\Delta\hat{u}+\left|\hat{u}\right|^{p-1}\hat{u}\right\| .
\end{align*}
As a result, the $L^{\infty}$-error of $u$ is estimated as asserted in \eqref{coroinequ}.
\end{proof}

\begin{rem}
	The right-hand side of \eqref{coroinequ} is small when the residual $\|\Delta\hat{u}+\left|\hat{u}\right|^{p-1}\hat{u}\|$ and also the $H^1_0$-error bound $\alpha$ are small.
\end{rem}

\section{Rigorous integration method}\label{integration}
To apply Theorem \ref{theo:plum2001} to problem \eqref{absproblem}, we have to construct a ``good'' approximate solution $\hat{u}\in V$ of \eqref{absproblem} such that $\delta$ in \eqref{zansa} is sufficiently small.
In this section, we assume that such an approximation $\hat{u}$ is constructed by a finite linear combination of basis functions $\left\{\phi_{i}\right\}_{i=1}^{\infty}$ that span $V$, and each $\phi_{i}$ is in $C^{\infty}(\overline{K_j})$ (and therefore, $\hat{u}\in C^{\infty}(\overline{K_j})$) for all $j=1,2,\cdots,M$, where $\left\{K_{i}\right\}_{j=1}^{M}$ is a rectangular or triangular ``mesh'' of $ \Omega $ that satisfies $\cup_{j}\overline{K_{j}}=\overline{\Omega}$ and ${\rm measure\,} (\cap_{j}\overline{K_{j}})=0$.
This restriction remains applicable to several numerical approximation methods for \eqref{absproblem} such as finite element methods and Fourier-Galerkin methods.
The ``mesh'' here is not necessarily reasonable for constituting $\hat{u}$, but can be set artificially finer for good integration accuracy.
Indeed, in the numerical example presented in Section \ref{sec/example}, we construct $\hat{u}$ to be smooth over the entire of $\Omega = (0,1)^2$ using a Fourier basis.
However, we divide $\Omega$ into several smaller rectangles as shown in Subsection \ref{subsec:rect} for integration purposes.
%where each $\phi_{i}$ is in $C^{\infty}(\overline{\Omega})$ (and therefore, $\hat{u}\in C^{\infty}(\overline{\Omega})$).

To obtain explicit bounds for $\delta$ and $K$ required in Theorem \ref{theo:plum2001}, we have to compute rigorously, in particular, $(\Delta\hat{u},\left|\hat{u}\right|^{p-1}\hat{u})$ and $(\phi_{i},\left|\hat{u}\right|^{p-1}\phi_{j})$; recall that $p\in (1,2)$ which makes this integration nontrivial.
In the following subsections, we discuss rigorous integration methods for rectangular and triangular cases.
More precisely, we propose methods for computing the integral
\begin{align*}
I=\displaystyle \int_{K_j}\left\{\eta(x,y)\right\}^{q}\xi(x,y)dxdy
\end{align*}
for $q \in (0,1)$ and $\eta,$\,$\xi\in C^{\infty}(\overline{K_j})$ ($j=1,2,\cdots,M$) in a rigorous enclosure form.
%, that is, for computing an enclosure for this integral,
Here, we assume that $\eta>0$ in $\Omega$ and $\eta=0$ on $\partial\Omega$; indeed, we later select $\eta=\hat{u}$, an approximate solution to \eqref{absproblem}, which has these properties.
The nonnegativity of $\hat{u}$ in $\Omega$ is proved through the procedures described in Subsections \ref{subsec:rect} and \ref{subsec:tri}.

Some rigorous integration methods can be applied to such an integration, under the assumption that $\eta>0$ on the whole closure of a domain $\Omega\subset \mathbb{R}^{2}$ (see, for example, \cite{storck1993numerical}).
However, because here the derivative of $\left\{\eta(\cdot,\cdot)\right\}^{q}:\Omega\rightarrow \mathbb{R}$ is generally not bounded near the boundary $\partial\Omega$, where $\eta$ vanishes, previous methods cannot be applied in our situation.
To overcome this difficultly, we use a Taylor expansion based approach as follows.

\subsection{Rectangular mesh}\label{subsec:rect}
We consider integration over $\Omega=(0,1)^{2}$, which is used in our numerical experiment in Section \ref{sec/example}.
%In this section, for our numerical experiment on the unit square ,
We first divide $\Omega$ into four sub-squares and consider the integration over $(0,0.5)^{2}$.
Integration over the three other parts can be carried out similarly after translation and rotation such that $\eta=0$ on both the left and the lower edge.
Moreover, we divide $\overline{(0,0.5)^{2}}$ into closed rectangles that are grouped into four types ($S_{1,1},\ S_{1,0},\ S_{0,1}$, and $S_{0,0}$) as in Fig.~\ref{figrectangle}.
 \begin{figure}[h]
 \begin{center}
  \includegraphics[height=100mm]{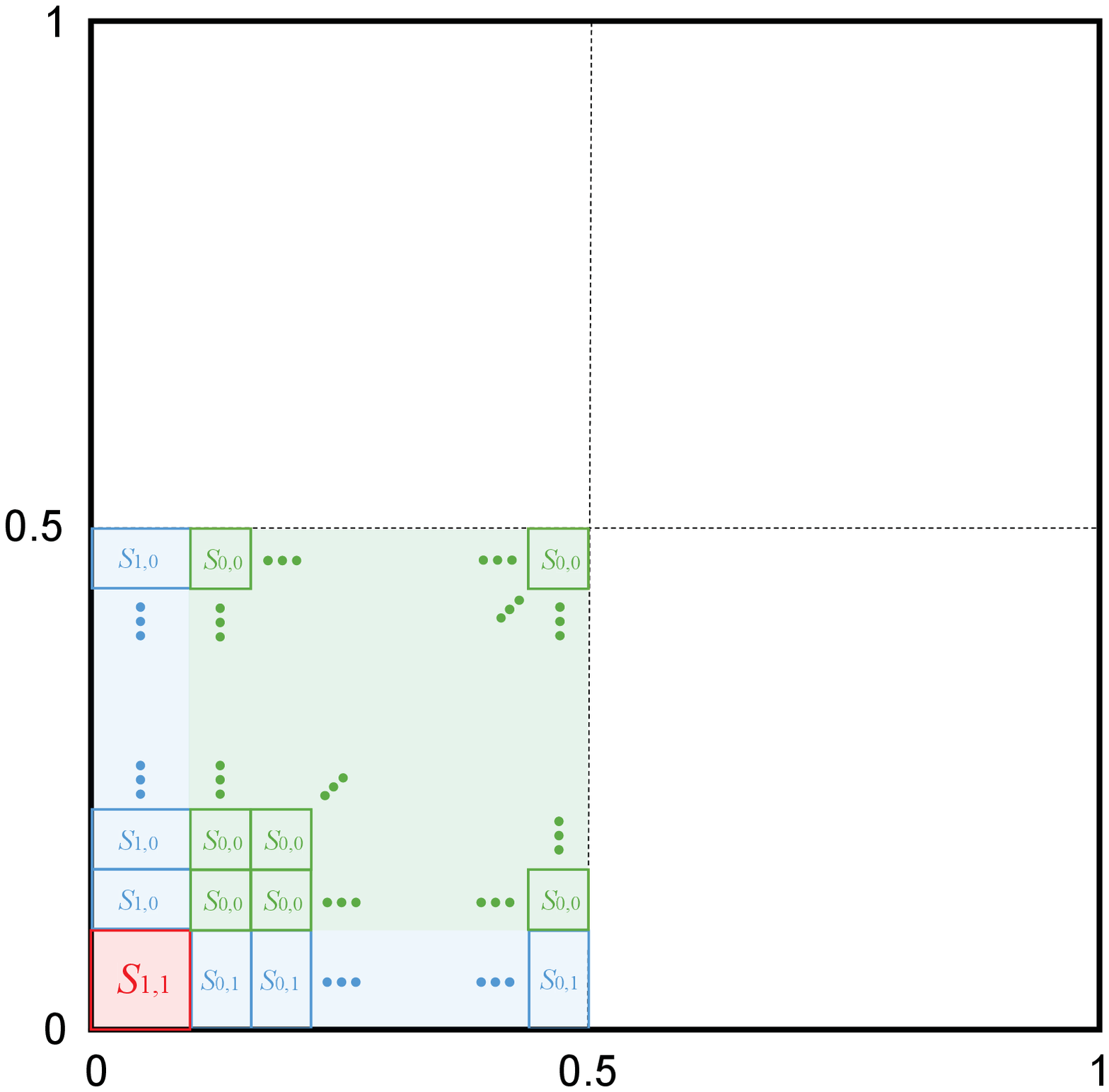}
 \end{center}
 \caption{Division of the domain $\Omega=(0,1)^{2}$.}
 \label{figrectangle} 
 \end{figure}
These types of rectangles have the following properties:
\begin{itemize}
\setlength{\leftskip}{0.5cm}
\setlength{\parskip}{0cm}
\setlength{\itemsep}{0.5pt}
\item[$S_{1,1}$:]a square where $\eta$ is zero on both the left and the lower edge;
\item[$S_{0,1}$:]rectangles where $\eta$ is zero only on the lower edge;
\item[$S_{1,0}$:]rectangles where $\eta$ is zero only on the left edge;
\item[$S_{0,0}$:]squares where $\eta>0$.
\end{itemize}
The integration over $(0,0.5)^{2}$ can be expressed by the summation of integrations over the above four types of rectangles.
%Therefore, we discuss an integration method over the four types of domains.
Hereafter, we use the notation $\Lambda_{n}^{1,1}=\{(i,j)\in \mathbb{N}^{2}\ :\ i\leq n,\ j\leq n\}$,
$\Lambda_{n}^{0,1}=\{(i,j)\in \mathbb{N}_{0}\times \mathbb{N}\ :\ i\leq n,\ j\leq n\}$,
$\Lambda_{n}^{1,0}=\{(i,j)\in \mathbb{N}\times \mathbb{N}_{0}\ :\ i\leq n,\ j\leq n\}$,
and $\Lambda_{n}=\{(i,j)\in \mathbb{N}_{0}^{2}\ :\ i\leq n,\ j\leq n\}$,
where $\mathbb{N}=\{1,2,3\cdots\}$ and $\mathbb{N}_{0}=\{0,1,2\cdots\}$.
We discuss rigorous integration methods for the four types of domains $S_{1,1}$, $S_{0,1}$, $S_{1,0}$, and $S_{0,0}$ in the following.

\subsubsection{Integration over $S_{1,1}$}\label{intS11}
Using the Taylor expansion around the lower-left corner $(0,0)$, we enclose $\eta(x,y)$ as
\begin{align}
\displaystyle \eta(x,y)\in\sum_{(i,j)\in\Lambda_{n-1}^{1,1}}a_{i,j}x^{i}y^{j}+\sum_{(i,j)\in\Lambda_{n}^{1,1}\backslash\Lambda_{n-1}^{1,1}}[\underline{a}_{i,j},\ \overline{a}_{i,j}]x^{i}y^{j},\label{taylorexpansionS11}
\end{align}
for $(x,y)\in S_{1,1}$, where $a_{i,j},\,\underline{a}_{i,j},\,\overline{a}_{i,j}\in \mathbb{R},\ \underline{a}_{i,j}\leq\overline{a}_{i,j}$.
In Appendix \ref{psa}, we discuss a numerical method (Type-II PSA) for deriving such an enclosure.
We then denote
\begin{align*}
[\displaystyle \eta_{1,1}(x,y)]:=\sum_{(i,j)\in\Lambda_{n-1}^{1,1}}a_{i,j}x^{i-1}y^{j-1}+\sum_{(i,j)\in\Lambda_{n}^{1,1}\backslash\Lambda_{n-1}^{1,1}}[\underline{a}_{i,j},\ \overline{a}_{i,j}]x^{i-1}y^{j-1},
\end{align*}
which more precisely means the set of all continuous functions $w$ over $S_{1,1}$ such that $w(x,y)\in[\eta_{1,1}(x,y)]$ for all $(x,y)\in S_{1,1}$.
Therefore, $\eta(x,y)\in xy[\eta_{1,1}(x,y)]$.

We moreover assume that $[\eta_{1,1}(x,y)]$ is positive in $S_{1,1}$ (that is, $z>0$ for all $z\in[\eta_{1,1}(x,y)],\ (x,y)$
$\in S_{1,1}$); if $S_{1,1}$ is sufficiently small and $n$ is sufficiently large, this positivity condition is expected to hold for $\eta=\hat{u}$.
In the actual computation, this condition will be numerically checked by suitable interval arithmetic techniques \cite{moore2009introduction, rump1999book, kashiwagikv}.
If the positivity of $\eta_{1,1}$ holds, we use Type-II PSA first to enclose $[\eta_{1,1}(x,y)]^{q}$, and then, in a second step, to enclose $[\eta_{1,1}(x,y)]^{q}\xi(x,y)$ as
\begin{align*}
[\displaystyle \eta_{1,1}(x,y)]^{q}\xi(x,y)\in\sum_{(i,j)\in\Lambda_{n-1}}b_{i,j}x^{i}y^{j}+\sum_{(i,j)\in\Lambda_{n}\backslash\Lambda_{n-1}}[\underline{b}_{i,j},\ \overline{b}_{i,j}]x^{i}y^{j},
\end{align*}
where $b_{i,j},\,\underline{b}_{i,j},\,\overline{b}_{i,j}\in \mathbb{R},\ \underline{b}_{i,j}\leq\overline{b}_{i,j}$.
Thus, the integration over $S_{1,1}$ is enclosed as
\begin{align}
&\displaystyle \int_{S_{1,1}}\left\{\eta(x,y)\right\}^{q}\xi(x,y)dxdy\nonumber\\
&\displaystyle \in\sum_{(i,j)\in\Lambda_{n-1}}\int_{S_{1,1}}b_{i,j}x^{i+q}y^{j+q}dxdy+\sum_{(i,j)\in\Lambda_{n}\backslash\Lambda_{n-1}}\int_{S_{1,1}}[\underline{b}_{i,j},\ \overline{b}_{i,j}]x^{i+q}y^{j+q}dxdy.\label{exintS11}
\end{align}
\begin{rem}\label{rem:integration}
We remark on integration of a set of continuous functions in \eqref{exintS11}; that is, we explain rigorous integration of
\begin{align*}
\displaystyle \int_{y_{1}}^{y_{2}}\int_{x_{1}}^{x_{2}}ax^{p}y^{q}dxdy,
\end{align*}
where generally, $a,\ p,\ q,\ x_{1},\ x_{2},\ y_{1},$ and $y_{2}$ are real intervals.
Note that $ax^{p}y^{q}$ precisely means the set of all continuous functions $w$ over $(\underline{x_{1}},\overline{x_{2}})\times(\underline{y_{1}},\overline{y_{2}})$ such that $w(x,y)\in ax^{p}y^{q}$ for all $(x,y)\in(\underline{x_{1}},\overline{x_{2}})\times(\underline{y_{1}},\overline{y_{2}})$,
where we denote $\displaystyle \underline{z}=\inf z$ and $\displaystyle \overline{z}=\sup z$ for an interval $z$.
Whereas formally the integral is simply computed as
\begin{align*}
\int_{y_{1}}^{y_{2}}\int_{x_{1}}^{x_{2}}ax^{p}y^{q}dxdy=& \frac{a}{(p+1)(q+1)}y_{2}^{q+1}x_{2}^{p+1}-\frac{a}{(p+1)(q+1)}y_{2}^{q+1}x_{1}^{p+1}\\
-&\left(\frac{a}{(p+1)(q+1)}y_{1}^{q+1}x_{2}^{p+1}-\frac{a}{(p+1)(q+1)}y_{1}^{q+1}x_{1}^{p+1}\right),
\end{align*}
one has to compute the above formula in the correct order using interval arithmetic techniques
because the distributive law does not hold in interval arithmetics.
For example, $\int_{-1}^{1}[0.8,1]$
$xdx$ is not zero but is correctly computed as
\begin{align*}
\displaystyle \int_{-1}^{1}[0.8,1]xdx=\left[[0.4,0.5]x^{2}\right]_{-1}^{1}=[0.4,0.5]-[0.4,0.5]=[-0.1,0.1].
\end{align*}
\end{rem}

\subsubsection{Integration over $S_{0,1}$ and $S_{1,0}$}\label{intS10}
Let $(x_{0},0)$ be the midpoint of the lower edge of $S_{0,1}$.
We denote $\eta^{*}(x,y):=\eta(x+x_{0},y),\ \xi^{*}(x,y):=\xi(x+x_{0},y)$, and $S_{0,1}^{*}:=S_{0,1}-(x_{0},0)$.
Because we have
\begin{align*}
\displaystyle \int_{S_{0,1}}\left\{\eta(x,y)\right\}^{q}\xi(x,y)dxdy=\int_{S_{0,1}^{*}}\left\{\eta^{*}(x,y)\right\}^{q}\xi^{*}(x,y)dxdy,
\end{align*}
we consider the right integral.

By Taylor expanding $\eta^{*}(x,y)$ around the midpoint $(0,0)$ of the lower edge of $S_{0,1}^{*}$, we enclose $\eta^{*}(x,y)$ as
\begin{align*}
\displaystyle \eta^{*}(x,y)\in\sum_{(i,j)\in\Lambda_{n-1}^{0,1}}a_{i,j}x^{i}y^{j}+\sum_{(i,j)\in\Lambda_{n}^{0,1}\backslash\Lambda_{n-1}^{0,1}}[\underline{a}_{i,j},\ \overline{a}_{i,j}]x^{i}y^{j},
\end{align*}
for $(x,y)\in S_{0,1}^{*}$, where $a_{i,j},\,\underline{a}_{i,j},\,\overline{a}_{i,j}\in \mathbb{R},\ \underline{a}_{i,j}\leq\overline{a}_{i,j}$.
We then denote
\begin{align*}
[\displaystyle \eta_{0,1}^{*}(x,y)]:=\sum_{(i,j)\in\Lambda_{n-1}^{0,1}}a_{i,j}x^{i}y^{j-1}+\sum_{(i,j)\in\Lambda_{n}^{0,1}\backslash\Lambda_{n-1}^{0,1}}[\underline{a}_{i,j},\ \overline{a}_{i,j}]x^{i}y^{j-1}
\end{align*}
(therefore, $\eta^{*}(x,y)\in y[\eta_{0,1}^{*}(x,y)]$), and again assume that $[\eta_{0,1}^{*}(x,y)]$ is positive in $S_{0,1}^{*}$.
We then enclose $[\eta_{0,1}^{*}(x,y)]^{q}\xi^{*}(x,y)$ as
\begin{align*}
[\displaystyle \eta_{0,1}^{*}(x,y)]^{q}\xi^{*}(x,y)\in\sum_{(i,j)\in\Lambda_{n-1}}b_{i,j}x^{i}y^{j}+\sum_{(i,j)\in\Lambda_{n}\backslash\Lambda_{n-1}}[\underline{b}_{i,j},\ \overline{b}_{i,j}]x^{i}y^{j},
\end{align*}
where $b_{i,j},\,\underline{b}_{i,j},\,\overline{b}_{i,j}\in \mathbb{R},\ \underline{b}_{i,j}\leq\overline{b}_{i,j}$.
Thus, we can enclose the integral over $S_{0,1}^{*}$ as
\begin{align*}
&\displaystyle \int_{S_{0,1}^{*}}\left\{\eta^{*}(x,y)\right\}^{q}\xi^{*}(x,y)dxdy\\
&\displaystyle \in\sum_{(i,j)\in\Lambda_{n-1}}\int_{S_{0,1}^{*}}b_{i,j}x^{i}y^{j+q}dxdy+\sum_{(i,j)\in\Lambda_{n}\backslash\Lambda_{n-1}}\int_{S_{0,1}^{*}}[\underline{b}_{i,j},\ \overline{b}_{i,j}]x^{i}y^{j+q}dxdy.
\end{align*}

The integration over $S_{1,0}$ is carried out similarly by exchanging the roles of the variables $x$ and $y$.

\subsubsection{Integration over $S_{0,0}$}\label{intS00}
Although integration over $S_{0,0}$ without irregularity can be realized using standard methods $($see, for example, {\rm \cite{storck1993numerical}}$)$,
we here introduce an enclosure method along with Subsubsections \ref{intS11} and \ref{intS10} for the sake of consistency.

Let $(x_{0},y_{0})$ be the center of $S_{0,0}$, and we re-define $\eta^{*}(x,y):=\eta(x+x_{0},y+y_{0}),\ \xi^{*}(x,y):=\xi(x+x_{0},y+y_{0})$, and $S_{0,0}^{*}:=S_{0,0}-(x_{0},y_{0})$.
Because we have
\begin{align*}
\displaystyle \int_{S_{0,0}}\left\{\eta(x,y)\right\}^{q}\xi(x,y)dxdy=\int_{S_{0,0}^{*}}\left\{\eta^{*}(x,y)\right\}^{q}\xi^{*}(x,y)dxdy,
\end{align*}
we consider the right integral.

By Taylor expanding $\eta^{*}(x,y)$ around the center $(0,0)$ of $S_{0,0}^{*}$, we have
\begin{align*}
\displaystyle \eta^{*}(x,y)\in[\eta_{0,0}^{*}(x,y)]:=\sum_{(i,j)\in\Lambda_{n-1}}a_{i,j}x^{i}y^{j}+\sum_{(i,j)\in\Lambda_{n}\backslash\Lambda_{n-1}}[\underline{a}_{i,j},\ \overline{a}_{i,j}]x^{i}y^{j},
\end{align*}
for $(x,y)\in S_{0,0}^{*}$, where $a_{i,j},\,\underline{a}_{i,j},\,\overline{a}_{i,j}\in \mathbb{R},\ \underline{a}_{i,j}\leq\overline{a}_{i,j}$.
Assuming that $[\eta_{0,0}^{*}(x,y)]$ is positive on $S_{0,0}^{*}$ (because $\eta>0$ on $S_{0,0}^{*}$, this property is expected to hold if $n$ is sufficiently large), we have
\begin{align*}
[\displaystyle \eta_{0,0}^{*}(x,y)]^{q}\xi^{*}(x,y)\in\sum_{(i,j)\in\Lambda_{n-1}}b_{i,j}x^{i}y^{j}+\sum_{(i,j)\in\Lambda_{n}\backslash\Lambda_{n-1}}[\underline{b}_{i,j},\ \overline{b}_{i,j}]x^{i}y^{j},
\end{align*}
where $b_{i,j},\,\underline{b}_{i,j},\,\overline{b}_{i,j}\in \mathbb{R},\ \underline{b}_{i,j}\leq\overline{b}_{i,j}$.
Thus, it follows that
\begin{align*}
&\displaystyle \int_{S_{0,0}^{*}}\left\{\eta^{*}(x,y)\right\}^{q}\xi^{*}(x,y)dxdy\\
&\displaystyle \in\sum_{(i,j)\in\Lambda_{n-1}}\int_{S_{0,0}^{*}}b_{i,j}x^{i}y^{j}dxdy+\sum_{(i,j)\in\Lambda_{n}\backslash\Lambda_{n-1}}\int_{S_{0,0}^{*}}[\underline{b}_{i,j},\ \overline{b}_{i,j}]x^{i}y^{j}dxdy.
\end{align*}
%\begin{rem}
%Integration over $S_{0,0}$ can also be realized using common methods $($see, for example, {\rm \cite{storck1993numerical}}$)$.
%\end{rem}

\subsection{Triangular mesh}\label{subsec:tri}
For the triangular case where $\Omega$ is divided into a union of triangles $ \cup_{i=1}^M \overline{K_i} $ without overlap, we add a further assumption to the mesh $\left\{K_{i}\right\}_{j=1}^{M}$ so that $\eta = 0$ on at most one edge of $K_{i}$.
The exceptional situation, where $\eta = 0$ on two edges, can be avoided by an additional division of the triangle; see Fig.~\ref{fig:addedge}.
Recall that the refined mesh does not need to be a triangulation required for finite element methods.
Indeed, the new ``mesh'' displayed in Fig.~\ref{fig:addedge} cannot be used for the usual finite element method because of a lack of division for the second left triangle.
However, this is applicable for the purpose of integration.

 \begin{figure}[t]
 \begin{center}
  \includegraphics[height=80mm]{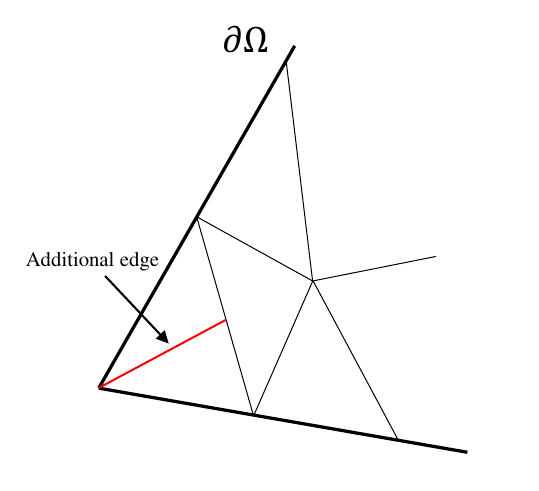}
 \end{center}
 \caption{Additional division of a triangle where $\eta = 0$ on the two edges that touch $\partial \Omega$.}
 \label{fig:addedge} 
 \end{figure}
 
 For a target triangle $K$, let $A(A_1,A_2)$, $B(B_1,B_2)$, and $C(C_1,C_2)$ be the vertices.
 If $K$ is strictly inside $\Omega$ so that no edge is shared with $\partial \Omega$ and $\eta>0$ thereon, we can use standard integration methods.
 Moreover, the method described below can be extended to such a ``regular'' situation as we did in Subsubsection \ref{intS00}.
 Therefore, we consider the case where $\eta = 0$ on the segment $\Gamma:=\overline{AB}$.
 Let $K^*$ be another triangle over new coordinates $(s,t)$ with the vertexes $A^*(-0.5,0)$, $B^*(0.5,0)$, and $C^*(0.5,1)$.
 We define the affine transformation $\phi$ for $K$ to $K^*$ by
 \begin{align}
	 \phi(A) = A^*,~~\phi(B) = B^*,~~\phi(C) = C^*
 \end{align}
 so that the center of segment $\Gamma^*:=\phi(\Gamma)$ is located at the origin of coordinates $(s,t)$; see Fig.~\ref{fig:transtri}.
 The transformation $\phi$ is explicitly given by
 \begin{align*}
 		\displaystyle \phi\left(\begin{array}{l}
 		x\\
 		y
 		\end{array}\right)=\frac{1}{D}\left(\begin{array}{l}
 		(C_{2}-B_{2})(x-A_{1})-(C_{1}-B_{1})(y-A_{2})\\
 		-(B_{2}-A_{2})(x-A_{1})+(B_{1}-A_{1})(y-A_{2})
 		\end{array}\right)
 		-
		\left(\begin{array}{l}
 		0.5\\
 		0
 		\end{array}\right)
 \end{align*}
 for $(x,y) \in K$, where $D:=(C_{2}-B_{2})(B_{1}-A_{1})-(C_{1}-B_{1})(B_{2}-A_{2})$.
 Using the transformation $\phi$, we denote $\eta^{*}(s,t):=\eta(\phi^{-1}(s,t)),\ \xi^{*}(s,t):=\xi(\phi^{-1}(s,t))$ and have 
 \begin{align*}
 \displaystyle \int_{K}\left\{\eta(x,y)\right\}^{q}\xi(x,y)dxdy = \frac{1}{|D|}\int_{K^{*}}\left\{\eta^{*}(s,t)\right\}^{q}\xi^{*}(s,t)dsdt,
 \end{align*}
 By Taylor expanding $\eta^{*}(s,t)$ around the origin $(0,0)$ on the new coordinates, we obtain the enclosure of $\eta^{*}(s,t)$ as
 \begin{align*}
 \displaystyle \eta^{*}(s,t)\in\sum_{(i,j)\in\Lambda_{n-1}^{0,1}}a_{i,j}s^{i}t^{j}+\sum_{(i,j)\in\Lambda_{n}^{0,1}\backslash\Lambda_{n-1}^{0,1}}[\underline{a}_{i,j},\ \overline{a}_{i,j}]s^{i}t^{j}.
 \end{align*}
 Therefore, we enclose the desired integration in the same manner described in Subsubsection \ref{intS10} in the following form of integration over $K^*$
 \begin{align*}
 \displaystyle \int_{0}^{1}\int_{-0.5}^{t-0.5}as^{i}t^{j}dsdt,
 \end{align*}
 where $a$ is a real interval and $i,j$ are integers.
 Specifically, this integration is computed via
  \begin{align*}
  &\displaystyle \int_{0}^{1}\int_{-0.5}^{t-0.5}as^{i}t^{j}dsdt\\
  = &\int_{0}^{1} \left\{\frac{a}{i+1}(t-0.5)^{i+1}t^j - \frac{a}{i+1}(-0.5)^{i+1}t^j\right\} dt\\
  = &\int_{0}^{1} \left\{\frac{a}{i+1}\sum_{k=0}^{i+1}\dbinom{i+1}{k} (-0.5)^{i+1-k} t^{j+k} - \frac{a}{i+1}(-0.5)^{i+1}t^j \right\} dt\\
  = &\frac{a}{i+1}\sum_{k=0}^{i+1}\frac{1}{j+k+1}\dbinom{i+1}{k} (-0.5)^{i+1-k} - \frac{a}{(i+1)(j+1)}(-0.5)^{i+1}.
  \end{align*}
  
  %where $\dbinom{i+1}{k}:=(i+1)!/(k!(i+1-k)!)$.
 
% This also should be computed in the correct order as mentioned in Remark \ref{rem:integration}.
 
  \begin{figure}[t]
  \begin{center}
   \includegraphics[height=60mm]{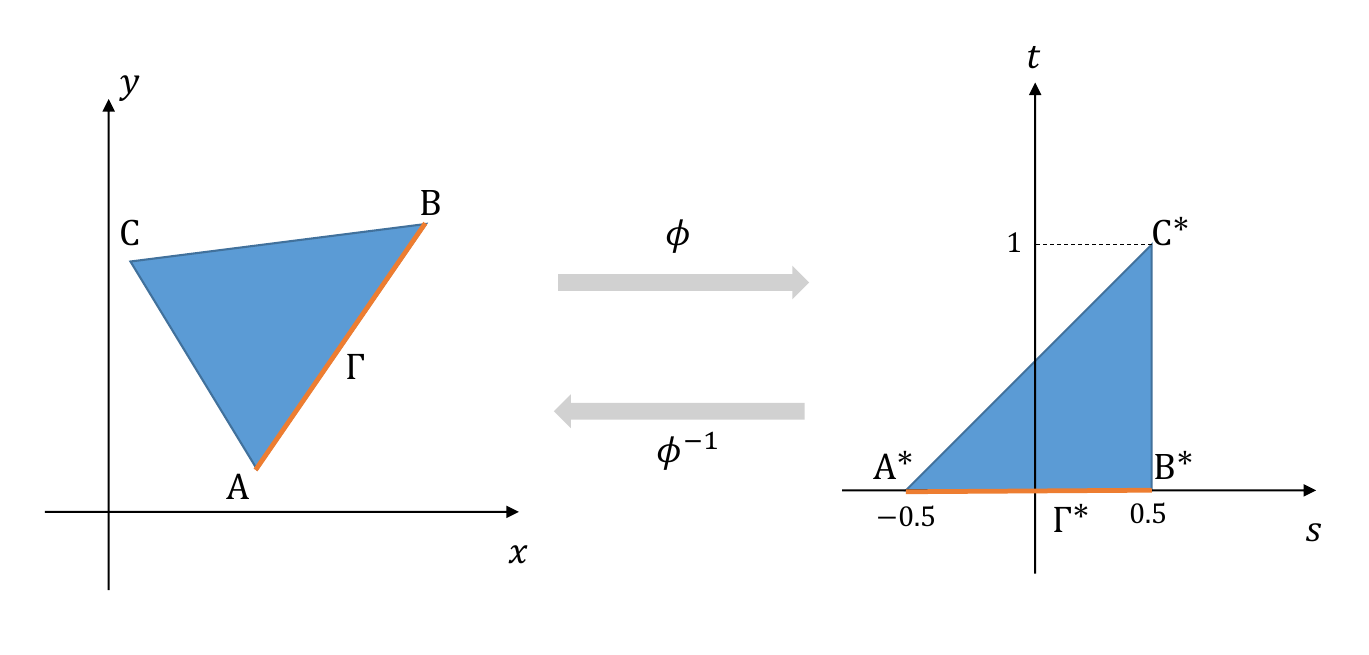}
  \end{center}
  \caption{Affine transformation $\phi$ for triangles.}
  \label{fig:transtri} 
  \end{figure}
\section{Numerical result}\label{sec/example}
In this section, we present a numerical result where a positive solution of \eqref{absproblem} is enclosed explicitly for $p=3/2$ and $\Omega=(0,1)^{2}$.
%Theorem \ref{plum2001s} is of course effective for the verification for \eqref{absproblem}, but fortunately $\mathcal F$ derived from \eqref{absproblem} is differentiable in a neighborhood of an approximate solution, therefore we use Theorem \ref{theo:plum2001} to show both nondegeneracy and local uniqueness of a verified solution simultaneously.
All computations were carried out on a computer with Intel Xeon E7-4830 at 2.20 GHz$\times$4, 2 TB RAM, CentOS 7, and MATLAB 2019b.
All rounding errors were strictly estimated using toolboxes for rigorous numerical computations---INTLAB version 11 \cite{rump1999book} and kv library version 0.4.49 \cite{kashiwagikv}.
Therefore, the correctness of all results was mathematically guaranteed.

We are interested in finding a reflection symmetric solution, and therefore replaced the solution space $V$ to the following subspace:
\begin{align*}
	V^0 := \left\{u\in V\ :\ u{\rm~is~symmetric~with~respect~to~}x=\frac{1}{2}{\rm~and~}y=\frac{1}{2}\right\}
\end{align*}
endowed with the same inner product as $V$.
This restriction helped us to reduce the calculation quantity somewhat.
Moreover, because eigenfunctions of \eqref{eiglam} are also restricted to symmetric functions,
eigenvalues associated with anti-symmetric eigenfunctions drop out of the minimization in \eqref{mu0}.
Therefore, $K$ can be reduced.
We calculated the other required constants $C_{p}$ and $\delta$ without exploiting such a restriction.

We selected a finite-dimensional subspace $V_{N}^0$ of $V^0$ as
\begin{align*}
V_{N}^0:=\left\{\sum_{\substack{(i,j)\in\Lambda^{1,1}_{N}\\i,j{\rm~are~odd}}}a_{i,j}\varphi_{i,j}\ :\ a_{i,j}\in \mathbb{R}\right\},
\end{align*}
where $\varphi_{i,j}(x,y)=\sin\left(i\pi x\right)\sin\left(j\pi y\right)$.
For this $V_{N}^0$, we have $C_{N}=(N+1)^{-1}\pi^{-1}$ satisfying \eqref{eq:CN-H2regular} because, for $u \in H^2(\Omega) \cap V^0$,
\begin{align*}
\left\|u-P_{N}u\right\|_{V}^{2}&=\left\|\left(u-P_{N}u\right)_{x}\right\|^{2}+\|\left(u-P_{N}u\right)_{y}\|^{2}\\
&=\displaystyle \sum_{(n,m)\in\Lambda^{1,1}_{\infty}\backslash\Lambda^{1,1}_{N}}a_{m,n}^{2}(m^{2}\pi^{2}+n^{2}\pi^{2})\left\|\varphi_{m,n}\right\|^{2}\\
&\displaystyle \leq\sum_{(n,m)\in\Lambda^{1,1}_{\infty}\backslash\Lambda^{1,1}_{N}}a_{m,n}^{2}\frac{(m^{2}\pi^{2}+n^{2}\pi^{2})^{2}}{(N+1)^{2}\pi^{2}}\left\|\varphi_{m,n}\right\|^{2}\\
&\displaystyle \leq\frac{1}{\left(N+1\right)^{2}\pi^{2}}\left\|-\Delta u\right\|^{2},
\end{align*}
where $m$ and $n$ in the summations may be restricted to odd numbers, but the value of $C_N$ remains the same.

By setting $q=r=4$ and $s=2$ in Theorem \ref{selectiong},  we obtain
\begin{align}
g\displaystyle \left(t\right)=\frac{3}{2}C_{2}^{\frac{3}{2}}C_{4}t^{\frac{1}{2}}\label{gfor3/2}
\end{align}
satisfying \eqref{lip} and \eqref{gconv} in Theorem \ref{theo:plum2001}.
Moreover, by selecting $q=4$ and $r=2$ in Corollary \ref{Linfcoro}, we have
\begin{align*}
\left\|u-\hat{u}\right\|_{L^{\infty}}
\leq c_{0}C_{2}\alpha+c_{1}\alpha+c_{2}\left\{\frac{3}{2}\alpha C_{4}\sqrt{\left\|\hat{u}\right\|+\frac{\alpha}{2}C_{2}}+\left\|\Delta\hat{u}+\left|\hat{u}\right|^{\frac{1}{2}}\hat{u}\right\|\right\}.
\end{align*}
Here, we can use the best constant $C_{2}=(\sqrt{2}\pi)^{-1}$ for the square $\Omega=(0,1)^{2}$.
Moreover, we computed $C_{4}\leq 0.318309887$ using Corollary \ref{roughboundtheo}.
%C_{2.5}<=0.26556050376605

An approximate solution $\hat{u}\in V^0_{N_u}$ ($N_u = 60$), which is displayed in Fig.~\ref{pic}, is computed by solving the following finite-dimensional system:
\begin{align}
	\left(\nabla \hat{u},\nabla v_{N}\right)_{L^2}
	=\left(\left|\hat{u}\right|^{\frac{1}{2}}\hat{u} ,v_{N}\right)_{L^2}~~{\rm for~all}~v_{N}\in V^{0}_{N_u} \label{eq:newtonapp}
\end{align}
via the usual Newton method with sufficient accuracy.
%We computed an approximate solution $\hat{u}$ to \eqref{emden_positive}, with the Fourier-Galerkin method, that is,  $\hat{u}$ was put up in the form
%\begin{align*}
%\displaystyle \hat{u}\left(x,y\right)=\sum_{\substack{1\leq i,j\leq N_u\\i,j{\rm~are~odd}}}a_{i,j}\varphi_{i,j}(x,y),~~~N_u=60,
%\end{align*}
%where $N_{u}=60$.

To rigorously compute the integrals required in the enclosing process, we divided $\Omega$ into some rectangles as displayed in Fig.~\ref{figrectangle} and used the integration method provided in Section \ref{subsec:rect}.
Using Theorem \ref{theo:plum2001}, Corollary \ref{up_positive_newkan}, and Corollary \ref{Linfcoro}, we proved the existence of a solution $u$ to \eqref{absproblem} in an $H_{0}^{1}$-ball $\overline{B}(\hat{u}, \alpha;\ \left\|\cdot\right\|_{V})$ and an $L^{\infty}$-ball $\overline{B}(\hat{u},\beta;\ \left\|\cdot\right\|_{L^{\infty}})$.
%, where $\overline{B}(x,r;\ \left\|\cdot\right\|)$ denotes the closed ball whose center and radius are $x$ and $r$, respectively, for norm $\left\|\cdot\right\|$.
Table \ref{veri/result} shows the result of the solution enclosure.
One confirms inequality \eqref{uptheo_2_katei} and, therefore, the positivity of the enclosed solution $u$.
%Hence, $u$ is also a solution to \eqref{absproblem}.

\begin{figure}[h]
 \begin{center}
  \includegraphics[height=80mm]{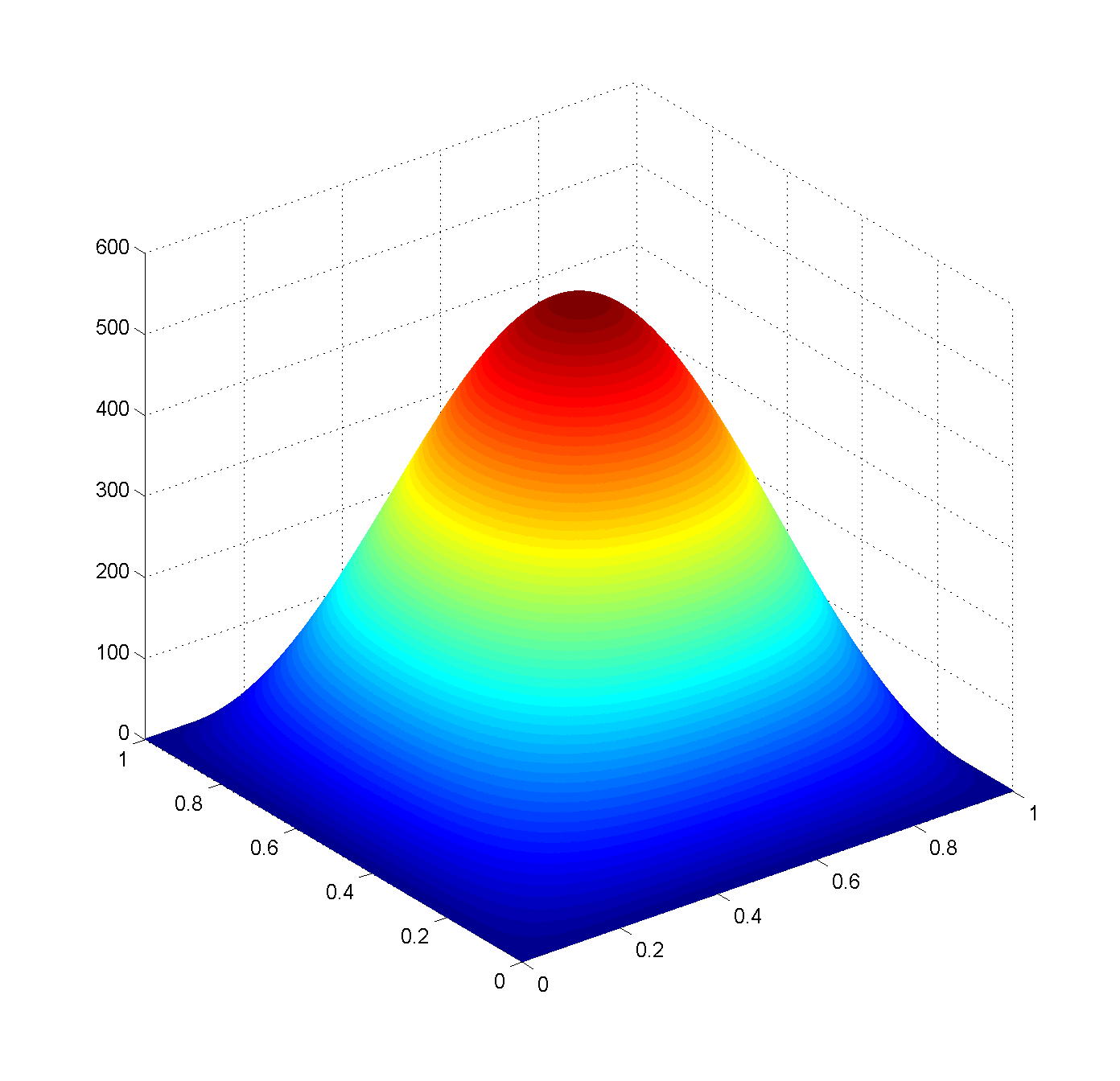}
 \end{center}
 \caption{Approximate solution of \eqref{absproblem} for $p=3/2$ on $\Omega=(0,1)^{2}$,
 the amplitude of which is proved to be in the interval [575.15, 575.61]. 
 }\label{pic}
\end{figure}

 \begin{table}[h]
  \caption{Solution enclosure result for \eqref{absproblem}.}
  \label{veri/result}
  \begin{center}
   \renewcommand\arraystretch{2}
   \footnotesize
   \begin{tabular}{cccccc}
    \hline
    $\|\Delta\hat{u}+\left|\hat{u}\right|^{\frac{1}{2}}\hat{u}\|$&
    $\delta$&
    $K\ (N=14)$&
    $\alpha$&
    $\beta$&
    %$C_{\frac{5}{2}}^{2}\sqrt{\left\|u_{-}\right\|_{L^{\frac{5}{2}}}+C_{\frac{5}{2}}\alpha}$\\
    $C_{\frac{5}{2}}^{2}\left(\left\|u_{-}\right\|_{L^{\frac{5}{2}}}+C_{\frac{5}{2}}\alpha\right)^{\frac{1}{2}}$\\
    %$\left\|\hat{u}_{-}\right\|_{L^{\frac{5}{2}}}+C_{\frac{5}{2}}\alpha-C_{\frac{5}{2}}^{-4}$\\
    \hline
    \hline
    [0.8311,~0.83150]&
    0.1871518&
    2.0000005&
    0.3909190&
    1.1462318&
    0.0227223 $(<1)$\\
   %-200.96563 $(<0)$\\
    \hline
   \end{tabular}
  \end{center}
$\delta,\ K$: the constants required in Theorem \ref{theo:plum2001}.\\
The value of $\|\Delta\hat{u}+\left|\hat{u}\right|^{\frac{1}{2}}\hat{u}\|$ is proved to be in the displayed interval.
The other numerical values represent upper bounds of the corresponding constants.
\end{table}

\section{Conclusion}
	We proposed a method for proving the existence of a positive solution $u$ of Lane--Emden's equation \eqref{absproblem} close to a numerically computed approximation $\hat{u}$ together with an explicit error bound.
	In particular, we focused on the sub-square case in which $p \in (1,2)$ so that the classical Newton-Kantorovich theorem cannot be applied, and a difficulty arises in rigorous numerical integrations.
	Our method was designed based on Theorem \ref{theo:plum2001}, a generalization of the classical Newton-Kantorovich theorem, and thus well applicable in the sub-square case.
	Also, a rigorous integration method was proposed to solve the difficulty caused by the singularity.
	We presented a numerical example where an explicit solution-enclosure is obtained for $ p=3/2 $ on the unit square domain $\Omega=(0,1)^2$.
\appendix
\section{Simple bounds for embedding constants}
\def\thesection{\Alph{section}}
The following theorem provides the best constant in the classical Sobolev inequality with critical exponents.
\begin{theo}[\cite{aubin1976,talenti1976}]\label{talentitheo}
Let $u$ be any function in $W^{1,q}\left(\mathbb{R}^{n}\right)$
$(n\geq 2)$, where $q$ is any real number such that $1<q<n$.
Moreover, set $p=nq/\left(n-q\right)$.
Then, $u \in L^{p}\left(\mathbb{R}^{n}\right)$ and
\begin{align*}
\left(\int_{\mathbb{R}^{n}}\left|u(x)\right|^{p}dx\right)^{\frac{1}{p}}\leq T_{p}\left(\int_{\mathbb{R}^{n}}\left|\nabla u(x)\right|_{2}^{q}dx\right)^{\frac{1}{q}}
\end{align*}
holds for
\begin{align}
T_{p}=\pi^{-\frac{1}{2}}n^{-\frac{1}{q}}\left(\frac{q-1}{n-q}\right)^{1-\frac{1}{q}}\left\{\frac{\Gamma\left(1+\frac{n}{2}\right)\Gamma\left(n\right)}{\Gamma\left(\frac{n}{q}\right)\Gamma\left(1+n-\frac{n}{q}\right)}\right\}^{\frac{1}{n}}\label{talenticonst},
\end{align}
where
$\left|\nabla u\right|_{2}=\left((\partial u/\partial x_{1})^{2}+(\partial u/\partial x_{2})^{2}+\cdots+(\partial u/\partial x_{n})^{2}\right)^{1/2}$,
and
$\Gamma$ denotes the Gamma function.
\end{theo}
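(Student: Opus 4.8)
The plan is to establish the inequality by the classical two‑step argument of symmetrization followed by an explicit one‑dimensional variational computation, as in Talenti's work \cite{talenti1976}. First I would reduce to a nonnegative function: since $|\nabla|u||=|\nabla u|$ almost everywhere and $\||u|\|_{L^p}=\|u\|_{L^p}$, it suffices to treat $u\ge 0$. Then I would replace $u$ by its symmetric decreasing rearrangement $u^{*}$. Equimeasurability gives $\|u^{*}\|_{L^p(\mathbb{R}^n)}=\|u\|_{L^p(\mathbb{R}^n)}$, while the Polya--Szego inequality gives $\|\nabla u^{*}\|_{L^q(\mathbb{R}^n)}\le\|\nabla u\|_{L^q(\mathbb{R}^n)}$; hence it is enough to bound the ratio $\|u\|_{L^p}/\|\nabla u\|_{L^q}$ over radially symmetric, nonincreasing $u$.

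For such $u$, writing $u(x)=\phi(|x|)$ and passing to polar coordinates reduces the problem to minimizing the one‑dimensional weighted quotient
\[
\frac{\left(\int_0^\infty |\phi'(r)|^q\,r^{n-1}\,dr\right)^{1/q}}{\left(\int_0^\infty |\phi(r)|^p\,r^{n-1}\,dr\right)^{1/p}}.
\]
I would analyze the associated Euler--Lagrange equation and verify that, modulo the scaling, dilation, and translation invariances of the original inequality, the extremals form the family
\[
U(x)=\left(1+|x|^{q'}\right)^{1-n/q},\qquad q':=\frac{q}{q-1},
\]
where $1-n/q<0$ since $q<n$; a direct check (using the conjugacy relation $q+q'=qq'$ and $q<n$) shows $U\in W^{1,q}(\mathbb{R}^n)\cap L^p(\mathbb{R}^n)$.

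The main obstacle is to show that the infimum of the quotient is attained, and equals its value at $U$, rather than merely bounded below: the functional is dilation invariant, so a minimizing sequence could a priori concentrate or spread out, and this must be excluded. In the reduced one‑dimensional problem this can be handled by a further monotone rearrangement in the radial variable together with a normalization that kills the dilation freedom, yielding a minimizer whose regularity and decay then let one integrate the Euler--Lagrange ODE explicitly; alternatively one can sidestep existence entirely by a direct integration‑by‑parts and Holder argument that forces the inequality and pins the equality case to the family $U$. Once $U$ is identified as the extremal, the constant $T_p$ is obtained by evaluating integrals of the form $\int_0^\infty r^{n-1}(1+r^{q'})^{s}\,dr$: the substitution $t=r^{q'}/(1+r^{q'})$ turns these into Beta integrals, and applying $B(a,b)=\Gamma(a)\Gamma(b)/\Gamma(a+b)$ together with $\Gamma(1+z)=z\Gamma(z)$ reduces the quotient at $U$ to the closed form \eqref{talenticonst}. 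The delicate points are the validity of the Polya--Szego inequality at the borderline integrability relevant here and the exclusion of loss of compactness in the minimization; the remaining Gamma‑function bookkeeping, though somewhat lengthy, is routine.
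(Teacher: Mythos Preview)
The paper does not prove this theorem at all: it is stated in the appendix as a classical result and attributed to Aubin \cite{aubin1976} and Talenti \cite{talenti1976}, with no argument given. So there is no ``paper's own proof'' to compare against.

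Your outline is, in substance, Talenti's original approach: reduce to nonnegative functions, apply Schwarz symmetrization together with the P\'olya--Szeg\H{o} inequality to pass to radial decreasing competitors, analyze the resulting one-dimensional Bliss-type quotient, identify the extremal family $(1+|x|^{q'})^{1-n/q}$, and evaluate the constant via Beta/Gamma integrals. As a high-level sketch this is correct. The points you yourself flag as ``delicate'' are indeed where the real work lies: showing the infimum is attained (or bypassing attainment via a direct Bliss-type inequality), and justifying P\'olya--Szeg\H{o} in $W^{1,q}(\mathbb{R}^n)$ rather than merely for smooth compactly supported functions. If you were to write this up in full, those two items would need more than a sentence each; but for a result the paper only cites, your sketch is an adequate indication that you know where the proof lives.
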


The following corollary, obtained from Theorem \ref{talentitheo}, provides a simple bound for the embedding constant from $H_{0}^{1}\left(\Omega\right)$ to $L^{p}(\Omega)$ for a bounded domain $\Omega$.

\begin{coro}[{\cite[Corollary A.2]{tanaka2017sharp}}]\label{roughboundtheo}
Let $\Omega\subset \mathbb{R}^{n}\,(n\geq 2)$ be a bounded domain.
Let $p$ be a real number such that $p\in(n/(n-1),2n/(n-2)]$ if $n\geq 3$ and $p\in(2,\infty)$ if $n=2$.
Moreover, set $q=np/(n+p).$
Then, $(\ref{embedding})$ holds for
\begin{align*}
	C_{p}=\left|\Omega\right|^{\frac{2-q}{2q}}T_{p},
\end{align*}
where $T_{p}$ is the constant in {\rm (\ref{talenticonst})}.
\end{coro}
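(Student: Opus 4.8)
The plan is to deduce the estimate from Talenti's inequality (Theorem~\ref{talentitheo}) on the whole space $\mathbb{R}^n$, applied to the zero-extension of $u$, and then to convert the $L^q$-norm of the gradient that Talenti produces into the desired $L^2$-norm by one application of Hölder's inequality on the bounded domain $\Omega$.

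First I would record the relations among the exponents. With $q=np/(n+p)$, the hypothesis $p>n/(n-1)$ is equivalent to $np>n+p$, hence to $q>1$; and $q<n$ is immediate since $p/(n+p)<1$. Moreover $n-q=n^2/(n+p)$, so that $nq/(n-q)=p$, which is exactly the exponent identity under which Theorem~\ref{talentitheo} may be invoked with this $q$. Finally, because $t\mapsto nt/(n+t)$ is increasing, the upper constraint on $p$ --- namely $p\le 2n/(n-2)$ when $n\ge 3$, with no upper constraint when $n=2$ --- forces $q\le 2$ in every case (with $q=2$ precisely at the critical exponent $p=2n/(n-2)$). These observations guarantee $1<q\le 2$ together with $1<q<n$, as required by Talenti's theorem.

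Now let $u\in V=H_0^1(\Omega)$ and let $\tilde u$ be its extension by zero to $\mathbb{R}^n$. Approximating $u$ in $H_0^1(\Omega)$ by functions in $C_c^\infty(\Omega)$ shows $\tilde u\in H^1(\mathbb{R}^n)$ and that $\nabla\tilde u$ is the zero-extension of $\nabla u$; since $\tilde u$ is supported in the bounded set $\overline{\Omega}$ and $q\le 2$, Hölder's inequality gives $\tilde u\in W^{1,q}(\mathbb{R}^n)$, so Theorem~\ref{talentitheo} applies and yields
\begin{align*}
\|u\|_{L^p(\Omega)}=\|\tilde u\|_{L^p(\mathbb{R}^n)}\le T_p\|\nabla\tilde u\|_{L^q(\mathbb{R}^n)}=T_p\|\nabla u\|_{L^q(\Omega)}.
\end{align*}
To finish, I would apply Hölder's inequality on $\Omega$ with the conjugate exponents $2/q$ and $2/(2-q)$ to obtain
\begin{align*}
\|\nabla u\|_{L^q(\Omega)}\le|\Omega|^{\frac1q-\frac12}\|\nabla u\|_{L^2(\Omega)}=|\Omega|^{\frac{2-q}{2q}}\|u\|_V,
\end{align*}
and chaining the two displays gives \eqref{embedding} with $C_p=|\Omega|^{(2-q)/(2q)}T_p$; when $q=2$ this last step is vacuous and the weight equals $1$.

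The computation is largely routine. The only step deserving attention is the zero-extension: one must check that extending an $H_0^1(\Omega)$ function by zero produces an $H^1(\mathbb{R}^n)$ function whose weak gradient is the zero-extension of the original gradient, and hence --- thanks to the bounded support and $q\le 2$ --- a $W^{1,q}(\mathbb{R}^n)$ function, so that Talenti's theorem is genuinely applicable.
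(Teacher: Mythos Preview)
Your argument is correct and follows exactly the same route as the paper: zero-extension to apply Talenti's inequality on $\mathbb{R}^n$, followed by H\"older's inequality on $\Omega$ to pass from the $L^q$- to the $L^2$-norm of the gradient. Your explicit verification that $1<q<n$ and $q\le 2$ (with strict inequality when $n=2$) is a welcome addition, as the paper only notes the latter in passing.
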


In \cite[Lemma 7.10]{nakaoplumwatanabe2019numerical}, another formula is provided to obtain an explicit bound $C_p$.

\section{Power series arithmetic}\label{psa}
Two types of Power Series Arithmetic (called Type-I PSA and Type-II PSA) 
%were proposed by Kashiwagi \cite{kashiwagi1995proc, kashiwagi1995power}, and 
have been packaged in \cite{kashiwagikv}.
Although describing PSAs in detail increases this paper's capacity, we explain here them to guarantee the self-containedness of this paper.
The other reason is that there are few detailed English documents about PSAs to which can be referred.
%Type-I PSA and Type-II PSA) have been developed and packaged in \cite{kashiwagikv}.
Both PSAs were originally designed to perform operations for sets of continuous functions defined on a closed interval $D=[\underline{d},\overline{d}]$ with $\underline{d},\,\overline{d}\in \mathbb{R}$, written in the form
\begin{align}
[u(x)]=\displaystyle \sum_{i=0}^{n}u_{i}x^{i}:=\left\{v\in C(D)\ :\ v(x)\in\sum_{i=0}^{n}u_{i}x^{i}~~\forall x\in D\right\},\label{psaform}
\end{align}
where each $u_{i}$ ($i=0,1,2,\cdots,n$) is a real number or a real interval $[\underline{u_{i}},\overline{u_{i}}],\ \underline{u_{i}}\leq\overline{u_{i}}$.
Type-I PSA performs such operations with neglecting terms of degree higher than $n$.
Therefore, Type-I PSA gives approximate results of the operations.
On the other hand, Type-II PSA gives a rigorous result of such operations; that is, an operation result from Type-II PSA always includes the correct operation result in a strict mathematical sense.

In the following, we introduce the original Type-II PSA in the one-dimensional case.
% together with some operation examples.
Subsequently, we present a generalization of Type-II PSA to higher-dimensional cases to obtain rigorous enclosures such as \eqref{taylorexpansionS11}.
\subsection{Type-II PSA in the one-dimensional case}
We consider rigorous operations for a set of continuous functions written in the form \eqref{psaform}.
The addition and the subtraction are respectively performed as
\begin{align*}
[u(x)]+[v(x)]=\displaystyle \sum_{i=0}^{n}(u_{i}+v_{i})x^{i}
\shortintertext{and}
[u(x)]-[v(x)]=\displaystyle \sum_{i=0}^{n}(u_{i}-v_{i})x^{i}.
\end{align*}

The multiplication is performed as follows.
We first multiply $[u(x)]$ and $[v(x)]$ without degree omissions:
\begin{align*}
[u(x)]\displaystyle \times[v(x)]=\sum_{i=0}^{2n}w_{i}x^{i},~~w_{k}=\displaystyle \sum_{i=\max(0,k-n)}^{\min(k,n)}u_{i}v_{k-i}.
\end{align*}
Then, we reduce its degree from $2n$ to $n$ on the basis of the degree reduction defined as follows.
\begin{defi}[Degree reduction]\label{degred}
For a power series $[u(x)]=u_{0}+u_{1}x+\cdots+u_{m}x^{m}$ over $D$, the degree reduction $[v(x)]$ to $n$~$(n<m)$ is defined by
\begin{align*}
[v(x)]=\displaystyle \sum_{i=0}^{n}v_{i}x^{i},
\end{align*}
where
\begin{align*}
v_{i}=u_{i}~(i=0,1,\cdots,n-1)~~{\rm and}~~v_{n}=\left\{\sum_{i=n}^{m}u_{i}x^{i-n}\ :\ x\in D\right\}.
\end{align*}
\end{defi}
Thus, the terms of degree more than $n$ are resorbed in the term of degree $n$.
As a result, the multiplication by Type-II PSA includes the correct multiplication result.
\begin{rem}\label{horner}
When computing
\begin{align*}
\left\{\sum_{i=n}^{m}u_{i}x^{i-n}\ :\ x\in D\right\},
\end{align*}
we have to evaluate the range of the polynomial $u_{n}+u_{n+1}x+\cdots+u_{m}x^{m-n}$ and
the ordinary interval arithmetic occasionally over-estimates the range.
A more accurate method that evaluates the range, such as the Horner scheme, is required to obtain a precise multiplication result.
\end{rem}
% (e.g, $\log(\cdot)$ and $\sin(\cdot)$)

We then apply Type-II PSA to general $C^{\infty}$-functions using the Taylor expansion with a remainder term.
For a $C^{\infty}$-function $f,\ f(u_{0}+u_{1}x+\cdots+u_{n}x^{n})$ is computed as
\begin{align}
&f(u_{0}+u_{1}x+\cdots+u_{n}x^{n})\nonumber\\
\subset&f(u_{0})+\displaystyle \sum_{i=1}^{n-1}\frac{1}{i!}f^{(i)}(u_{0})(u_{1}x+\cdots+u_{n}x^{n})^{i}\nonumber\\
&~~~+\displaystyle \frac{1}{n!}f^{(n)}\left({\rm hull}_{\,}\left( u_0, \left\{\sum_{i=0}^{n}u_{i}x^{i}\ :\ x\in D\right\}\right)\right)(u_{1}x+\cdots+u_{n}x^{n})^{n},\label{eq:hull}
\end{align}
by Taylor expanding $f$ around $u_{0}$, where hull\,($a,b$) denotes the convex hull of real numbers or real intervals $a$ and $b$.
Here, additions, subtractions, and multiplications in the above process are operated by Type-II PSA defined so far, and the expression
\begin{align*}
\left\{\sum_{i=0}^{n}u_{i}x^{i}\ :\ x\in D\right\}
\end{align*}
is similarly computed as mentioned in Remark \ref{horner}.
The division can be operated as $[u]/[v]:=[u]\times f([v])$ with $f(x)=1/x$ using the above method.
\begin{rem}
In our examples, the interval $D$ in \eqref{eq:hull} contains zero in all cases.
Indeed, in the integration procedures described in Section $\ref{integration}$, the domains $S_{0,1}$, $S_{1,0}$, and $S_{0,0}$ of integrations are translated to contain $(0,0)$ $($Type-II PSA in the two-dimensional case will be introduced in Appendix \ref{subsec:higherdimPSA}$)$.
Hence, in our examples,
\begin{align*}
{\rm hull}_{\,}\left( u_0, \left\{\sum_{i=0}^{n}u_{i}x^{i}\ :\ x\in D\right\}\right)
=
\left\{\sum_{i=0}^{n}u_{i}x^{i}\ :\ x\in D\right\}
\end{align*}
always holds in \eqref{eq:hull}.
\end{rem}

\begin{rem}
Type II-PSA is designed so that the coefficients of degree less than $n$ are not intervals but real numbers, and only the coefficient of degree $n$ is an interval.
However, to strictly guarantee all results from Type II-PSA in actual computation,
the coefficients of degree less than $n$ often become intervals that arise only from rounding errors.
\end{rem}

\if0
\subsection{Examples of Type-II PSA}
Here, we present simple examples of Type-II PSA where $($degree of PSA $n)=2$ and $D=[0,0.1],\ [u(x)]=1+2x-3x^{2}$ and $[v(x)]=1-x+x^{2}$.

The addition operation and the subtraction operation are respectively performed as
$[u(x)]+[v(x)]=2+x-2x^{2}$
and
$[u(x)]-[v(x)]=0+3x-4x^{2}$.

The multiplication operation is performed as follows.
We first multiply them as
\begin{align*}
[u(x)]\times[v(x)]&=1+x-4x^{2}+5x^{3}-3x^{4}\\
&=1+x+(-4+5x-3x^{2})x^{2}
\end{align*}
without degree omissions.
Since interval arithmetic gives
\begin{align*}
-4+(5-3\times[0,0.1])\times[0,0.1]\subset-4+[0,0.5]\subset[-4,-3.5],
\end{align*}
we determine the multiplication result as
\begin{align*}
[u(x)]\times[v(x)]=1+x-[-4,-3.5]x^{2},
\end{align*}
on the basis of the the degree reduction in Definition \ref{degred}.

The computation $\log([u(x)])$ is performed as follows.
The range of $[u(x)]$ is computed as
\begin{align*}
1+(2-3\times[0,0.1])\times[0,0.1]\subset 1+[0,0.2]\subset[1,1.2].
\end{align*}
We then compute the second degree Taylor expansion, with a remainder term, of $\log(t)$ around $1$ (the constant term of $[u(x)]$) on $[1,1.2]$ as
\begin{align*}
0+(t-1)-\displaystyle \frac{1}{2[1,1.2]^{2}}(t-1)^{2}.
\end{align*}
By substituting $[u(x)]$ for $t$ in this expansion, we have
\begin{align*}
0+(2x-3x^{2})-\displaystyle \frac{1}{2[1,1.2]^{2}}(2x-3x^{2})^{2}.
\end{align*}
Consequently, by reducing this expression using Type-II PSA, we have
\begin{align*}
\log([u(x)])=0+2x+\left[-5,-\frac{143}{36}\right]x^{2}.
\end{align*}
\fi

\subsection{Type-II PSA in the higher-dimensional cases}\label{subsec:higherdimPSA}
One-dimensional Type-II PSA is originally designed for power series that have real or real-interval coefficients.
However, the set of coefficients in Type-II PSA can be generalized to any set equipped with the four arithmetic operations.
Therefore, Type-II PSA can be generalized to two-dimensional cases by replacing its coefficients with one-dimensional power series
because the set of power series itself is endowed with the four arithmetic operations by Type-II PSA.
To be more precise, by replacing each coefficient $u_{i}$ in
\begin{align}
[u(x)]=\displaystyle \sum_{i=0}^{n}u_{i}x^{i},~~x\in D_{u}\label{psau}
\end{align}
with one-dimensional power series
\begin{align*}
[v_{i}(y)]=\displaystyle \sum_{j=0}^{n}v_{i,j}y^{j},~~y\in D_{v},
\end{align*}
we can regard $[u]$ as a two-dimensional power series
\begin{align}
[u(x,y)]=\displaystyle \sum_{i=0}^{n}[v_{i}(y)]x^{i}=\sum_{i=0}^{n}\sum_{j=0}^{n}v_{i,j}x^{i}y^{j},~~(x,y)\in D_{u}\times D_{v}.\label{w}
\end{align}
Thus, Type-II PSA for the one-dimensional case is naturally carried over to the two-dimensional case.
In the same way, Type-II PSA can be applied to higher-dimensional cases; that is, the $(n+1)$-dimensional power series with the four arithmetic operations are defined by replacing each coefficient $u_{i}$ in \eqref{psau} with an $n$-dimensional power series.

%\section{Relationship between Theorem \ref{theo:plum2001} and IN-Linz}\label{sec:relTheo1andINLinz}
%After the development of a verification method FN-Int, another approach {\it FN-Norm} was proposed and then further extended to {\it IN-Linz} (see \cite{nakao2011numerical,nakaoplumwatanabe2019numerical} and the references therein).
%The purpose of this section is to briefly explain the relationship between Theorem 1 and IN-Linz for a residual formulation.

%In the IN-Linz approach, to obtain a residual form of the target problem, the intermediate functions $u_N \in V_N$ and $\overline{u} \in H^1_0(\Omega)$ satisfying
%\begin{align}
%	(\nabla u_N, \nabla u_N) = 
%\end{align}
%an intermediate function $\overline{u}$ satisfying $-\Delta \overline{u} = f(\hat{u})$
%is required
 
%Although FN-Int can be applied to our desired problem \eqref{absproblem} in theory,
%Theorem \ref{theo:plum2001} would give us a better estimation because this theorem proves the convergence of the Newton's sequence with the linearized operator $-\Delta - p |\hat{u}|^{p-1}$ in the place of $-\Delta$.
%In \cite{nakao2011numerical}

\section*{Acknowledgments}
This work was supported by CREST, JST Grant Number JPMJCR14D4.
%and by JSPS KAKENHI Grant Number JP19K14601.
%\bibliographystyle{wileyauy}

\bibliographystyle{amsplain}
\bibliography{ref.bib}

\providecommand{\bysame}{\leavevmode\hbox to3em{\hrulefill}\thinspace}
\providecommand{\MR}{\relax\ifhmode\unskip\space\fi MR }
% \MRhref is called by the amsart/book/proc definition of \MR.
\providecommand{\MRhref}[2]{%
  \href{http://www.ams.org/mathscinet-getitem?mr=#1}{#2}
}
\providecommand{\href}[2]{#2}
\begin{thebibliography}{10}

\bibitem{arioli2010computer}
G.~Arioli and H.~Koch, \emph{Computer-assisted methods for the study of
  stationary solutions in dissipative systems, applied to the
  {Kuramoto--Sivashinski} equation}, Archive for rational mechanics and
  analysis \textbf{197} (2010), no.~3, 1033--1051.

\bibitem{arioli2012non}
G.~Arioli and H.~Koch, \emph{Non-symmetric low-index solutions for a symmetric
  boundary value problem}, Journal of Differential Equations \textbf{252}
  (2012), no.~1, 448--458.

\bibitem{aubin1976}
T.~Aubin, \emph{Probl\`emes isop\'erim\'etriques et espaces de {Sobolev}},
  Journal of Differential Geometry \textbf{11} (1976), no.~4, 573--598.

\bibitem{behnke1991calculation}
H.~Behnke, \emph{The calculation of guaranteed bounds for eigenvalues using
  complementary variational principles}, Computing \textbf{47} (1991), no.~1,
  11--27.

\bibitem{damascelli1999qualitative}
L.~Damascelli, M.~Grossi, and F.~Pacella, \emph{Qualitative properties of
  positive solutions of semilinear elliptic equations in symmetric domains via
  the maximum principle}, Annales de l'Institut Henri Poincare-Nonlinear
  Analysis \textbf{16} (1999), no.~5, 631--652.

\bibitem{day2007validated}
S.~Day, J.-P. Lessard, and K.~Mischaikow, \emph{Validated continuation for
  equilibria of {PDEs}}, SIAM Journal on Numerical Analysis \textbf{45} (2007),
  no.~4, 1398--1424.

\bibitem{de2019morse}
F.~De~Marchis, M.~Grossi, I.~Ianni, and F.~Pacella, \emph{Morse index and
  uniqueness of positive solutions of the {Lane-Emden} problem in planar
  domains}, Journal de Math{\'e}matiques Pures et Appliqu{\'e}es \textbf{128}
  (2019), 339--378.

\bibitem{gameiro2010analytic}
M.~Gameiro and J.-P. Lessard, \emph{Analytic estimates and rigorous
  continuation for equilibria of higher-dimensional {PDEs}}, Journal of
  Differential Equations \textbf{249} (2010), no.~9, 2237--2268.

\bibitem{gameiro2011rigorous}
M.~Gameiro and J.-P. Lessard, \emph{Rigorous computation of smooth branches of
  equilibria for the three dimensional {Cahn--Hilliard} equation}, Numerische
  Mathematik \textbf{117} (2011), no.~4, 753--778.

\bibitem{gidas1979symmetry}
B.~Gidas, W.-M. Ni, and L.~Nirenberg, \emph{Symmetry and related properties via
  the maximum principle}, Communications in Mathematical Physics \textbf{68}
  (1979), no.~3, 209--243.

\bibitem{gladiali2011bifurcation}
F.~Gladiali, M.~Grossi, F.~Pacella, and P.~Srikanth, \emph{Bifurcation and
  symmetry breaking for a class of semilinear elliptic equations in an
  annulus}, Calculus of Variations and Partial Differential Equations
  \textbf{40} (2011), no.~3, 295--317.

\bibitem{grisvard2011elliptic}
P.~Grisvard, \emph{Elliptic problems in nonsmooth domains}, vol.~69, SIAM,
  2011.

\bibitem{kashiwagikv}
M.~Kashiwagi, \emph{{kv library}}, 2020, \url{http://verifiedby.me/kv/}.

\bibitem{lin1994uniqueness}
C.-S. Lin, \emph{Uniqueness of least energy solutions to a semilinear elliptic
  equation in $\mathbb{R}^{2}$}, manuscripta mathematica \textbf{84} (1994),
  no.~1, 13--19.

\bibitem{lions1982existence}
P.-L. Lions, \emph{On the existence of positive solutions of semilinear
  elliptic equations}, SIAM review \textbf{24} (1982), no.~4, 441--467.

\bibitem{liu2015framework}
X.~Liu, \emph{A framework of verified eigenvalue bounds for self-adjoint
  differential operators}, Applied Mathematics and Computation \textbf{267}
  (2015), 341--355.

\bibitem{mckenna2009uniqueness}
P.~J. McKenna, F.~Pacella, M.~Plum, and D.~Roth, \emph{A uniqueness result for
  a semilinear elliptic problem: A computer-assisted proof}, Journal of
  Differential Equations \textbf{247} (2009), no.~7, 2140--2162.

\bibitem{mckenna2012computer}
P.~J. McKenna, F.~Pacella, M.~Plum, and D.~Roth, \emph{A computer-assisted
  uniqueness proof for a semilinear elliptic boundary value problem},
  Inequalities and Applications 2010, Springer, 2012, pp.~31--52.

\bibitem{miyajima2012numerical}
S.~Miyajima, \emph{Numerical enclosure for each eigenvalue in generalized
  eigenvalue problem}, Journal of Computational and Applied Mathematics
  \textbf{236} (2012), no.~9, 2545--2552.

\bibitem{moore2009introduction}
R.~E. Moore, R.~B. Kearfott, and M.~J. Cloud, \emph{Introduction to interval
  analysis}, Siam, 2009.

\bibitem{nakaoplumwatanabe2019numerical}
M.~T. Nakao, M.~Plum, and Y.~Watanabe, \emph{Numerical verification methods and
  computer-assisted proofs for partial differential equations}, Springer Series
  in Computational Mathematics, 2019.

\bibitem{nakao2011numerical}
M.~T. Nakao and Y.~Watanabe, \emph{Numerical verification methods for solutions
  of semilinear elliptic boundary value problems}, Nonlinear Theory and Its
  Applications, IEICE \textbf{2} (2011), no.~1, 2--31.

\bibitem{pacella2017computer}
F.~Pacella, M.~Plum, and D.~R{\"u}tters, \emph{A computer-assisted existence
  proof for {Emden's} equation on an unbounded {L}-shaped domain},
  Communications in Contemporary Mathematics \textbf{19} (2017), no.~02,
  1750005.

\bibitem{plum1991bounds}
M.~Plum, \emph{Bounds for eigenvalues of second-order elliptic differential
  operators}, Zeitschrift f{\"u}r angewandte Mathematik und Physik ZAMP
  \textbf{42} (1991), no.~6, 848--863.

\bibitem{plum1992explicit}
M.~Plum, \emph{Explicit ${H^{2}}$-estimates and pointwise bounds for solutions
  of second-order elliptic boundary value problems}, Journal of Mathematical
  Analysis and Applications \textbf{165} (1992), no.~1, 36--61.

\bibitem{plum1994enclosures}
M.~Plum, \emph{Enclosures for weak solutions of nonlinear elliptic boundary
  value problems}, Inequalities and applications, World Scientific, 1994,
  pp.~505--521.

\bibitem{plum2001computer}
M.~Plum, \emph{Computer-assisted enclosure methods for elliptic differential
  equations}, Linear Algebra and its Applications \textbf{324} (2001), no.~1,
  147--187.

\bibitem{plum2008}
M.~Plum, \emph{Existence and multiplicity proofs for semilinear elliptic
  boundary value problems by computer assistance}, Jahresbericht der Deutschen
  Mathematiker Vereinigung \textbf{110} (2008), no.~1, 19--54.

\bibitem{rump1999book}
S.~Rump, \emph{{INTLAB - INTerval LABoratory}}, {Developments~in~Reliable
  Computing} (T.~Csendes, ed.), Kluwer Academic Publishers, Dordrecht, 1999,
  \url{http://www.ti3.tuhh.de/rump/}, pp.~77--104.

\bibitem{storck1993numerical}
U.~Storck, \emph{Numerical integration in two dimensions with automatic result
  verification}, Mathematics in science and engineering \textbf{189} (1993),
  187--224.

\bibitem{takayasu2014remarks}
A.~Takayasu, X.~Liu, and S.~Oishi, \emph{Remarks on computable a priori error
  estimates for finite element solutions of elliptic problems}, Nonlinear
  Theory and Its Applications, IEICE \textbf{5} (2014), no.~1, 53--63.

\bibitem{talenti1976}
G.~Talenti, \emph{Best constant in {Sobolev} inequality}, Annali di Matematica
  pura ed Applicata \textbf{110} (1976), no.~1, 353--372.

\bibitem{tanaka2020numerical}
K.~Tanaka, \emph{Numerical verification method for positive solutions of
  elliptic problems}, Journal of Computational and Applied Mathematics
  \textbf{370} (2020), 112647.

\bibitem{tanaka2017sharp}
K.~Tanaka, K.~Sekine, M.~Mizuguchi, and S.~Oishi, \emph{{Sharp numerical
  inclusion of the best constant for embedding $H_0^1(\Omega) \hookrightarrow
  L^p(\Omega)$ on bounded convex domain}}, Journal of Computational and Applied
  Mathematics \textbf{311} (2017), 306--313.

\bibitem{tanaka2014verified}
K.~Tanaka, A.~Takayasu, X.~Liu, and S.~Oishi, \emph{Verified norm estimation
  for the inverse of linear elliptic operators using eigenvalue evaluation},
  Japan Journal of Industrial and Applied Mathematics \textbf{31} (2014),
  no.~3, 665--679.

\bibitem{van2010global}
J.~v.~d. Berg, J.-P. Lessard, and K.~Mischaikow, \emph{Global smooth solution
  curves using rigorous branch following}, Mathematics of computation
  \textbf{79} (2010), no.~271, 1565--1584.

\bibitem{watanabe1997verified}
Y.~Watanabe, M.~Nakao, and N.~Yamamoto, \emph{Verified computation of solutions
  for nondifferentiable elliptic equations related to {MHD} equilibria},
  Nonlinear Anal. \textbf{28} (1997), 577--587.

\end{thebibliography}

\end{document}